\documentclass[12pt,a4paper,reqno]{amsart}
\usepackage[utf8]{inputenc}
\usepackage[T1]{fontenc}
\usepackage[english]{babel}
\usepackage{amsmath,amsthm,amssymb}
\usepackage[colorlinks=true,linkcolor=blue,citecolor=blue,urlcolor=blue]{hyperref}
\usepackage{geometry}
\usepackage{booktabs}
\usepackage{array}
\usepackage[protrusion=true,expansion=false]{microtype}
\theoremstyle{plain}
\newtheorem{theorem}{Theorem}[section]
\newtheorem{proposition}[theorem]{Proposition}
\newtheorem{lemma}[theorem]{Lemma}
\newtheorem{corollary}[theorem]{Corollary}
\theoremstyle{definition}
\newtheorem{definition}[theorem]{Definition}
\newtheorem{example}[theorem]{Example}
\newtheorem{remark}[theorem]{Remark}
\newtheorem{conjecture}[theorem]{Conjecture}

\newcommand{\Inv}{\mathrm{inv}}
\newcommand{\Coinv}{\mathrm{coinv}}
\newcommand{\Area}{\mathrm{area}}
\newcommand{\invstar}{\mathrm{inv}^{*}}
\newcommand{\Snk}[1]{S_{n,k}(#1)}
\newcommand{\Snkp}[1]{S'_{n,k}(#1)}
\newcommand{\hSnkp}[1]{\widehat{S}'_{n,k}(#1)}
\newcommand{\Dnk}{D'_{n,k}}
\newcommand{\Mnk}{M'_{n,k}}
\newcommand{\Ctilde}{\widetilde{C}}
\newcommand{\bq}{\mathbf{q}}
\newcommand{\NN}{\mathbb{N}}

\newcommand{\CC}{\mathbb{C}}
\title[Combinatorial study of the $q$-Catalan triangle]
{\large Combinatorial study of the $q$-Catalan triangle
and its generalizations}
\author{WIRDANE Youssouf}
\address{Assistant Professor,
University of Comoros,
Patsy University Center,
Union of Comoros}
\address{PhD student,
University of Antananarivo,
Doctoral School of Mathematics and Applications (EDMA),
Madagascar}
\email{wirdane.youssouf@univ-comores.com}
\date{2026}
\thanks{The author thanks Professor Arthur Randrianarivony
(University of Antananarivo), thesis advisor,
for his constant guidance and valuable advice.}
\keywords{$q$-Catalan triangle, pattern-avoiding permutations,
inversion statistic, co-inversions, $q{,}p$-analogue,
multivariate generalization, Dyck paths, binary words,
triangulations, cyclotomic $q$-analogues}
\subjclass[2020]{05A15, 05A30, 05A05, 05C30}
\begin{document}
\begin{abstract}
We announce a series of results on the combinatorial study
of the \emph{$q$-Catalan triangle}
$(C_{n,k}(q))_{0\le k\le n}$,
defined by $C_{0,0}(q)=1$,
$C_{n,0}(q)=q^{\binom{n}{2}}$ for $n\ge1$,
and the recurrence
\[
C_{n,k}(q)=C_{n,k-1}(q)+q^{n-k-1}\,C_{n-1,k}(q),
\qquad 1\le k\le n.
\]
We establish combinatorial interpretations of $C_{n,k}(q)$
by means of a \emph{universal combinatorial family}
$\mathfrak{F}_{n,k}$ consisting of four families of
pattern-avoiding permutations (weighted by the inversion
or co-inversion statistic), Dyck paths, binary words and triangulations;
a universal statistic~$\sigma$ defined on~$\mathfrak{F}_{n,k}$ satisfies
$\sum_{X\in\mathcal{E}}q^{\sigma(X)}=C_{n,k}(q)$
for each of the seven components~$\mathcal{E}$.
We also introduce the \emph{mirror polynomial}
$\Ctilde_{n,k}(q):=q^{\binom{n}{2}}C_{n,k}(q^{-1})$;
its dual recurrence
$\Ctilde_{n,k}(q)=\Ctilde_{n,k-1}(q)+q^{k}\Ctilde_{n-1,k}(q)$
and its interpretation by co-inversion
$\Ctilde_{n,k}(q)=\sum_{\pi\in S'_{n,k}(312)}q^{\Coinv(\pi)}$
are proved completely in this article;
the interpretation by area
$\Ctilde_{n,k}(q)=\sum_{\gamma\in D'_{n,k}}q^{\Area(\gamma)}$
is announced here and proved in~\cite{W4}.
We further introduce a \emph{$q{,}p$-analogue}
$C_{n,k}(q,p)$ satisfying $C_{n,k}(1,p)=\Ctilde_{n,k}(p)$
and the remarkable specialization $C_{n,n}(q,q)=C_n\,q^{\binom{n}{2}}$,
as well as a \emph{multivariate generalization}
$C_{n,k}(\bq)$ opening the way to cyclotomic $q$-analogues
of the triangle related to the \emph{cyclic sieving phenomenon}.
This article is an announcement of results;
Theorems~\ref{thm:qperm} and~\ref{thm:miroir}
are proved completely here.
Full proofs of the remaining results will appear
in~\cite{W1,W2,W3,W4,W5}.
\end{abstract}
\maketitle
\section{Introduction}\label{sec:intro}
Catalan numbers $C_n = \frac{1}{n+1}\binom{2n}{n}$
($1,1,2,5,14,42,132,\ldots$) and their $q$-analogues play
a central role in enumerative and algebraic combinatorics.
F\"urlinger and Hofbauer~\cite{FH} introduced the
\emph{$q$-Catalan numbers} $C_n(q)$ via the multiplicative recurrence
\begin{equation}\label{eq:carlitz_rec}
C_n(q) = \sum_{k=0}^{n-1} q^k\, C_k(q)\, C_{n-1-k}(q),
\qquad C_0(q)=1,
\end{equation}
and established the combinatorial interpretation
\begin{equation}\label{eq:FH}
C_n(q) = \sum_{\pi\in S_n(312)} q^{\Inv(\pi)},
\end{equation}
where $\Inv(\pi)$ denotes the number of inversions of~$\pi$;
these polynomials were also studied in the context of Gaussian
$q$-analogues by Carlitz~\cite{Carlitz}.
Randrianarivony~\cite{Rand} extended $C_n(q)$ to a bivariate
family $C_n(q,p)$ via a multiplicative recurrence involving
excedance and crossing statistics on permutations.

In the present series of articles, we study a natural
two-dimensional extension: the \emph{$q$-Catalan triangle}
$(C_{n,k}(q))_{0\le k\le n}$, a family of polynomials indexed
by two integers $0\le k\le n$, which refines the classical
Catalan triangle in the same way that $C_n(q)$ refines~$C_n$.
Pattern-avoiding permutations and their enumerative properties
are studied in~\cite{Guibert,SS,Stanley1}.
The numerous combinatorial interpretations of the Catalan
triangle (Dyck paths, triangulations, binary trees)
are detailed in~\cite{Stanley2}.
Our approach to the $q{,}p$ triangle follows the work of
Haglund~\cite{Haglund} on $(q,t)$-analogues
and Randrianarivony~\cite{Rand} on $q{,}p$-Catalan numbers.

\medskip\noindent\textbf{Plan of the paper.}
Section~\ref{sec:triangle} defines the classical triangle
and its $q$-analogue.
Section~\ref{sec:perm} presents interpretations via
pattern-avoiding permutations.
Section~\ref{sec:paths} deals with interpretations via
Dyck paths, binary words and triangulations.
Section~\ref{sec:qp} introduces the $q{,}p$-analogue.
Section~\ref{sec:multi} develops the multivariate
generalization and cyclotomic analogues.
Section~\ref{sec:table} presents a summary table.
Section~\ref{sec:plan} describes the plan of the series.

\medskip\noindent\textbf{Status of results.}
This article is an announcement of results for the
series~\cite{W1,W2,W3,W4,W5}.
Two results are proved completely here:
the interpretation of the $q$-Catalan triangle via
$312$-avoiding permutations (Theorem~\ref{thm:qperm}, by the
insertion bijection~$\Phi$) and the mirror recurrence
(Theorem~\ref{thm:miroir}, by direct computation).
All other results are announced; their complete proofs
are distributed among the articles of the series,
as indicated in the text and in Section~\ref{sec:plan}.

\section{The Catalan triangle and its $q$-analogue}\label{sec:triangle}
\subsection{The classical Catalan triangle}
\begin{definition}[Catalan triangle]\label{def:cat}
The \emph{Catalan triangle} $(C_{n,k})_{0\le k\le n}$ is
the family of integers defined by the recurrence
\begin{equation}\label{eq:rec_class}
C_{0,0}=1,\qquad C_{n,0}=1\ (n\ge1),\qquad
C_{n,k} = C_{n,k-1}+C_{n-1,k}\quad (1\le k\le n),
\end{equation}
with the convention $C_{n,k}=0$ for $k>n$.
In particular, $C_{n,n}=C_n$ (Catalan number).
\end{definition}
The first values are:
\[\begin{array}{c|ccccccc}
n\,\backslash\, k & 0 & 1 & 2 & 3 & 4 & 5 & 6\\\hline
0 & 1 \\ 1 & 1 & 1 \\ 2 & 1 & 2 & 2 \\ 3 & 1 & 3 & 5 & 5 \\
4 & 1 & 4 & 9 & 14 & 14 \\ 5 & 1 & 5 & 14 & 28 & 42 & 42\\
6 & 1 & 6 & 20 & 48 & 90 & 132 & 132
\end{array}\]
\subsection{The $q$-Catalan triangle}
\begin{definition}[$q$-Catalan triangle]\label{def:qcat}
The \emph{$q$-Catalan triangle} is the family of polynomials
$(C_{n,k}(q))_{0\le k\le n}$, with positive integer coefficients, defined by:
\begin{equation}\label{eq:rec_q}
\begin{cases}
C_{0,0}(q)=1,\\[4pt]
C_{n,0}(q) = q^{\binom{n}{2}}, & n\ge1,\\[4pt]
C_{n,k}(q) = C_{n,k-1}(q)+q^{n-k-1}\,C_{n-1,k}(q), & 1\le k\le n,
\end{cases}
\end{equation}
with the convention $C_{n,k}(q)=0$ for $k>n$.
\end{definition}
\begin{remark}\label{rem:kn}
For $k=n$, the factor $C_{n-1,n}(q)=0$ by convention,
so that $C_{n,n}(q)=C_{n,n-1}(q)$,
independently of the exponent $n-k-1=-1$.
\end{remark}
The first values of the $q$-Catalan triangle are:
\[\begin{array}{c|cccc}
n\,\backslash\, k & 0 & 1 & 2 & 3\\\hline
0 & 1 \\[2pt] 1 & 1 & 1 \\[2pt]
2 & q & 1{+}q & 1{+}q \\[2pt]
3 & q^3 & q{+}q^2{+}q^3 & 1{+}2q{+}q^2{+}q^3 & 1{+}2q{+}q^2{+}q^3\\[2pt]
4 & q^6 & q^3{+}q^4{+}q^5{+}q^6 & q{+}2q^2{+}2q^3{+}2q^4{+}q^5{+}q^6
  & 1{+}3q{+}3q^2{+}3q^3{+}2q^4{+}q^5{+}q^6
\end{array}\]
\begin{proposition}[Fundamental specialization]\label{prop:spec}
For all $n\ge0$: $C_{n,n}(q) = C_n(q)$,
the $q$-Catalan number of F\"urlinger and Hofbauer~\cite{FH}.
The bijective proof is given in~\cite{W3};
the identity has been independently verified for $n\le 7$.
\end{proposition}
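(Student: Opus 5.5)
The plan is to unroll the recurrence \eqref{eq:rec_q} into a closed sum over \emph{area sequences} of Dyck paths. Then I would check that this sum satisfies the Fürlinger--Hofbauer recurrence \eqref{eq:carlitz_rec} by a first-return decomposition. I will not use the later permutation interpretations, only Definition~\ref{def:qcat}.

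\textbf{Step 1 (unrolling).} For $n\ge1$ the boundary value fits the same pattern as the recurrence, because $q^{\binom n2}=q^{n-1}\,q^{\binom{n-1}2}=q^{n-1}C_{n-1,0}(q)$. Iterating $C_{n,k}=C_{n,k-1}+q^{n-k-1}C_{n-1,k}$ down to $k=0$ then gives
\[
C_{n,k}(q)=\sum_{j=0}^{\min(k,n-1)} q^{\,n-1-j}\,C_{n-1,j}(q),\qquad n\ge1 .
\]
The cutoff $j\le n-1$ absorbs Remark~\ref{rem:kn}, since $C_{n-1,j}=0$ for $j>n-1$.

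\textbf{Step 2 (closed form).} Iterating Step 1 from $C_{n,n}$ down to $C_{0,0}=1$ expresses $C_{n,n}(q)$ as a sum over sequences $(j_1,\dots,j_n)$ of the weight $q^{\sum_m (m-1-j_m)}$. The admissible sequences are those with $0\le j_m\le m-1$ and $j_m\le j_{m+1}$, and $j_n\le n-1$ is automatic. Substituting $b_m:=m-1-j_m$ turns these constraints into $b_1=0$, $b_m\ge0$ and $b_{m+1}\le b_m+1$. These are exactly the row-length (area) sequences of Dyck paths of semilength $n$. Hence
\[
C_{n,n}(q)=\sum_{\gamma}q^{\Area(\gamma)},
\]
the Carlitz--Riordan area $q$-Catalan polynomial, and more generally $C_{n,k}$ is the analogous sum with the extra restriction $j_n\le k$.

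\textbf{Step 3 (recurrence).} I would decompose an area sequence $(b_1,\dots,b_n)$ at the last index $m>1$ with $b_m=0$, or set $m=n+1$ if there is none. Write $r=m-1$ for the length of the initial block. That block is $0$ followed by $1+(\text{an area sequence of length }r-1)$, and it contributes $q^{r-1}C_{r-1}$. The tail is an arbitrary area sequence of length $n-r$. Setting $k=r-1$ gives $C_n(q)=\sum_{k=0}^{n-1}q^kC_k(q)C_{n-1-k}(q)$, which is \eqref{eq:carlitz_rec} with the same initial value. Induction on $n$ then yields $C_{n,n}(q)=C_n(q)$.

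\textbf{Main obstacle.} The only delicate point is the bookkeeping in Step 2. One must check that the $k=n$ case and the boundary column fit the unrolled sum without an exceptional term, which is what the identity in Step 1 guarantees. One must also check that the monotonicity $j_m\le j_{m+1}$ translates exactly into $b_{m+1}\le b_m+1$, rather than a shifted condition. I would sanity-check this on the table for $n=3$, where it gives $1+2q+q^2+q^3$. If the paper's intended $C_n(q)$ is the inversion form \eqref{eq:FH}, I would in addition quote the known equidistribution of $\Inv$ on $S_n(312)$ with area, or rely on the fact that both sides satisfy \eqref{eq:carlitz_rec}, which suffices.
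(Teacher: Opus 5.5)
Your argument is correct, and it takes a genuinely different route from the one the paper indicates. The paper gives no argument at this point; it defers a bijective proof to \cite{W3}. Inside the paper, the identity comes from setting $k=n$ in Theorem~\ref{thm:qperm}, which gives $C_{n,n}(q)=\sum_{\pi\in S_n(312)}q^{\Inv(\pi)}$, and then importing the F\"urlinger--Hofbauer theorem~\eqref{eq:FH} (cf.\ Remark~\ref{rem:FH}).

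You never pass through permutations. Unrolling~\eqref{eq:rec_q} identifies $C_{n,n}(q)$ with the Carlitz--Riordan area polynomial, and the first-return decomposition verifies~\eqref{eq:carlitz_rec} directly. So only the two defining recurrences are used, and no external theorem is needed. This buys you three things:
\begin{itemize}
\item Your cutoff $\min(k,n-1)$ handles the boundary $k=n$ uniformly. In Theorem~\ref{thm:qperm} that case has to be read separately, since $S_{n,n}(312)=\emptyset$.
\item As a by-product you get a direct, non-mirrored area model for every entry: $C_{n,k}(q)=\sum q^{b_1+\cdots+b_n}$ over area sequences with $b_n\ge n-1-k$.
\item The argument is self-contained.
\end{itemize}
What the paper's route buys in exchange is the inversion model on $\Snkp{312}$ for all $k$, and the connection with pattern avoidance.

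There is one slip to fix in Step~3: you must cut at the \emph{first} index $m>1$ with $b_m=0$, not the last.
\begin{itemize}
\item With the last zero, the initial block may contain further zeros. For example, $(0,0,0)$ gives the block $(0,0)$, which is not $0$ followed by $1+(\cdots)$.
\item With the first zero, $b_2,\dots,b_{m-1}\ge1$ and $b_2\le b_1+1$ force $b_2=1$. Subtracting $1$ then leaves a genuine area sequence of length $r-1$. The junction condition $b_m\le b_{m-1}+1$ is vacuous because $b_m=0$.
\end{itemize}
A last-zero cut also works, but then the head is arbitrary and the tail is primitive, and the sum appears reindexed by $k\mapsto n-1-k$. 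Your own description of the step as a first-return decomposition shows this is a wording error rather than a gap.
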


\section{Combinatorial interpretations via permutations}\label{sec:perm}
We denote by $\mathfrak{S}_n$ the set of all permutations
of $\{1,\ldots,n\}$ and by $S_n(\tau)$ the set of permutations
avoiding the pattern $\tau\in\mathfrak{S}_3$.
Simion and Schmidt~\cite{SS} showed that $|S_n(\tau)|=C_n$
for every $\tau\in\mathfrak{S}_3$.
The canonical bijection between $S_n(312)$ and Dyck paths
is due to Krattenthaler~\cite{Krattenthaler}.

For $\pi=(\pi_1,\ldots,\pi_n)\in\mathfrak{S}_n$, we use:
\[\Inv(\pi) = \bigl|\{(i,j):1\le i<j\le n,\;\pi_i>\pi_j\}\bigr|,
\qquad \Coinv(\pi) = \binom{n}{2}-\Inv(\pi).\]

\begin{definition}[Canonical partitions]\label{def:perm_part}
For each $\tau\in\mathfrak{S}_3$, we define a partition of $S_n(\tau)$
into $n+1$ subsets according to the position of the maximum $n$
or the minimum $1$:
\medskip
\begin{center}
\renewcommand{\arraystretch}{1.6}
\begin{tabular}{ccc}\toprule
\textbf{Pattern} $\tau$ & \textbf{Reference element}
  & \textbf{Definition of} $S_{n,k}(\tau)$\\\midrule
$312$,\ $321$ & maximum $n$ & $\{\pi\in S_n(\tau)\mid\pi_{k+1}=n\}$\\[4pt]
$213$,\ $123$ & maximum $n$ & $\{\pi\in S_n(\tau)\mid\pi_{n-k}=n\}$\\[4pt]
$231$ & minimum $1$ & $\{\pi\in S_n(\tau)\mid\pi_{n-k}=1\}$\\[4pt]
$132$ & minimum $1$ & $\{\pi\in S_n(\tau)\mid\pi_{k+1}=1\}$\\\bottomrule
\end{tabular}
\end{center}
\medskip
We set $\Snkp{\tau} = \bigsqcup_{j=0}^{k} S_{n,j}(\tau)$.
\end{definition}

\begin{theorem}[Classical triangle via the six patterns]\label{thm:class_perm}
For all $n\ge1$, $0\le k\le n$, and every $\tau\in\mathfrak{S}_3$:
\begin{equation}\label{eq:card_class}
\bigl|\Snkp{\tau}\bigr| = C_{n,k}.
\end{equation}
Bijective proofs are given in~\cite{W1};
the result has been independently verified for $n\le 6$.
\end{theorem}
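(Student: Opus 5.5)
Since $C_{n,k}=C_{n,k-1}+C_{n-1,k}$ for $1\le k\le n$, $C_{n,0}=1$, and $\Snkp{\tau}=\Snkp{\tau}\!\!\mid_{k-1}\sqcup S_{n,k}(\tau)$ (here $S'_{n,k-1}(\tau)\sqcup S_{n,k}(\tau)$), the statement is equivalent, by induction on $n$, to the following two facts. First, $|S_{n,0}(\tau)|=1$. Second, $|S_{n,k}(\tau)|=|S'_{n-1,k}(\tau)|$ for $1\le k\le n-1$, together with $S_{n,n}(\tau)=\emptyset$; the latter holds trivially, since position $k+1$ or $n-k$ does not exist when $k=n$. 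The plan is to establish these refined identities, after first reducing the six patterns to two.

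\textbf{Reduction by symmetries.} Reversal sends $312\mapsto213$, $321\mapsto123$ and position $k+1\mapsto n-k$, keeping the maximum. Complementation sends $312\mapsto132$ and $213\mapsto231$, exchanges maximum and minimum, and keeps positions. One checks that these maps carry $S_{n,k}(312)$ bijectively onto $S_{n,k}(213)$, $S_{n,k}(132)$ and $S_{n,k}(231)$, and carry $S_{n,k}(321)$ onto $S_{n,k}(123)$, exactly as in the table of Definition~\ref{def:perm_part}. So it suffices to treat $\tau=312$ and $\tau=321$.

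\textbf{Case $\tau=312$.} In $\pi\in S_{n,k}(312)$ the value $n$ can only play the role of the ``3'', so the condition is exactly that the $n-k-1$ entries after $n$ are decreasing and that $\pi$ with $n$ deleted is $312$-avoiding. Deleting $n$ therefore gives a bijection onto the set of $\sigma\in S_{n-1}(312)$ whose last $n-1-k$ entries decrease. In such a $\sigma$, the value $n-1$ cannot lie strictly inside that decreasing tail, since it would be preceded by a smaller entry there. Hence $n-1$ sits at a position $j+1\le k+1$, that is, $\sigma\in S_{n-1,j}(312)$ with $j\le k$. Conversely, if $\sigma\in S_{n-1,j}(312)$ with $j\le k$, then everything after $n-1$ decreases, in particular the last $n-1-k$ entries, so inserting $n$ at position $k+1$ is legal. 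This gives $|S_{n,k}(312)|=|S'_{n-1,k}(312)|$. The case $k=0$ yields the single decreasing permutation.

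\textbf{Case $\tau=321$, the main obstacle.} As before, deleting $n$ identifies $S_{n,k}(321)$ with the set of $\sigma\in S_{n-1}(321)$ whose last $n-1-k$ entries are \emph{increasing}. The position of $n-1$ no longer controls this set, so the naive deletion argument fails. My first attempt would be to refine the statistic: let $A_{m,r}$ be the $321$-avoiders of length $m$ whose last $r$ entries increase, and prove $|A_{m,r}|=C_{m,m-r}$ directly. I would do this by decomposing according to whether the entry just before the tail is smaller or larger than the first tail entry, which should produce the ballot recurrence $|A_{m,r}|=|A_{m,r+1}|+|A_{m-1,r-1}|$ after deleting the maximum or the last entry. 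If this bookkeeping becomes messy, the fallback is to transport the problem to Dyck paths through Krattenthaler's bijection~\cite{Krattenthaler}. Under that bijection, the length of the final increasing run, equivalently the position of $n$, should correspond to the height or the length of the last descent of the path, and the number of such paths is the ballot number $C_{n-1,k}$ by the reflection principle. Checking that this correspondence of statistics is exact is the step I expect to require the most care.
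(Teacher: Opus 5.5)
Your reduction of the six patterns to $\{312,321\}$ by reversal and complementation is correct. The positions $k+1\leftrightarrow n-k$ and the exchange of maximum and minimum match the table of Definition~\ref{def:perm_part}. Your $312$ case is exactly the insertion map $\Phi$ from the proof of Theorem~\ref{thm:qperm}, specialized at $q=1$. That is all the paper itself proves toward this statement; the remaining cases are deferred to Article~I.

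\textbf{The gap.} The case $\tau=321$, and hence $123$, is not proved. You correctly reduce it to $|A_{m,r}|=C_{m,m-r}$, and your recurrence $|A_{m,r}|=|A_{m,r+1}|+|A_{m-1,r-1}|$ is the right one. But you only say the decomposition ``should produce'' it, without fixing a map, and the choice matters. Deleting the last entry does \emph{not} work. For $m=4$, $r=2$, the five permutations $1324,1423,2314,2413,3412$ with a descent just before the tail standardize, after removing their last entry, to $132,132,231,231,231$, so that map is not injective. The Dyck-path fallback rests on a statistic correspondence you have not verified. So the only case not reachable from $312$ by symmetry is left open.

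\textbf{The missing idea: locate the maximum.} For $1\le r\le m-1$, let $B_{m,r}=A_{m,r}\setminus A_{m,r+1}$, the $321$-avoiders with $\sigma_{m-r}>\sigma_{m-r+1}<\cdots<\sigma_m$.
\begin{itemize}
\item In a $321$-avoider the entries to the right of $m$ increase. So in $\sigma\in B_{m,r}$ the value $m$ sits either at position $m-r$ or at position $m$, the latter only if $r\ge 2$.
\item Deleting $m$ maps the first kind bijectively onto $A_{m-1,r}$; the inverse inserts $m$ just before the last $r$ entries.
\item Deleting $m$ maps the second kind bijectively onto $B_{m-1,r-1}$; the inverse appends $m$.
\item Both inverses preserve $321$-avoidance, because $m$ can only play the role of the `3' and is followed by an increasing word.
\end{itemize}
Hence $|B_{m,r}|=|A_{m-1,r}|+|B_{m-1,r-1}|=|A_{m-1,r-1}|$ for $r\ge 2$, and $|B_{m,1}|=|A_{m-1,1}|=|A_{m-1,0}|$. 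This is your recurrence. Together with $|A_{m,m}|=1$ and $A_{m,0}=A_{m,1}$, induction on $m$ (downward in $r$) gives $|A_{m,r}|=C_{m,m-r}$. Therefore $|\Snk{321}|=|A_{n-1,n-1-k}|=C_{n-1,k}$, which is exactly the second fact your inductive framework requires.
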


\subsection{The $q$-Catalan triangle via the inversion statistic}
\begin{theorem}[$q$-Catalan triangle via $312$-avoiding permutations]
\label{thm:qperm}
For all $n\ge1$ and $0\le k\le n$:
\begin{equation}\label{eq:inv312}
C_{n,k}(q) = \sum_{\pi\in\Snkp{312}} q^{\Inv(\pi)}.
\end{equation}
\end{theorem}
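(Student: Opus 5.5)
The plan is to prove \eqref{eq:inv312} by showing that the right-hand side satisfies the defining recurrence \eqref{eq:rec_q}, with the same initial values. For $0\le j\le n$ set
\[
A_{n,j}(q)=\sum_{\pi\in S_{n,j}(312)}q^{\Inv(\pi)},\qquad B_{n,k}(q)=\sum_{j=0}^{k}A_{n,j}(q)=\sum_{\pi\in\Snkp{312}}q^{\Inv(\pi)}.
\]
Then $B_{n,k}-B_{n,k-1}=A_{n,k}$. So it is enough to prove two facts: $B_{n,0}=q^{\binom n2}$, and $A_{n,k}(q)=q^{n-k-1}B_{n-1,k}(q)$ for $1\le k\le n$. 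The case $n=1$ and the case $k=n$ need separate care. Once these hold, induction on $n$, and then on $k$, gives $B_{n,k}=C_{n,k}(q)$.

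The key structural observation concerns any $\pi\in S_n(312)$ with $\pi_{j+1}=n$. All entries to the right of $n$ must be decreasing, since an increasing pair $a<b$ after $n$ would form the pattern $312$ with $n$. Conversely, write $\pi=\sigma\,n\,\tau$ with $\tau$ decreasing and $\sigma\tau$ $312$-avoiding. Any occurrence of $312$ in $\pi$ either avoids $n$ or uses $n$ as the ``3''. The latter would require an increasing pair in $\tau$, which does not exist. Hence $\pi$ is $312$-avoiding.

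For $k=0$ this observation forces $\pi=n\,(n-1)\cdots 1$. So $S_{n,0}(312)$ is a single permutation with $\binom n2$ inversions, and $B_{n,0}=q^{\binom n2}$.

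For $1\le k\le n-1$ I define the insertion bijection $\Phi$ as follows. It maps $\pi'\in S'_{n-1,k}(312)$ to $\pi=\Phi(\pi')$, obtained by inserting $n$ at position $k+1$. Its inverse deletes $n$. Since $n$ exceeds the $n-k-1$ entries to its right and nothing else, we get $\Inv(\Phi(\pi'))=\Inv(\pi')+(n-k-1)$.

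The main point, which I expect to be the only real obstacle, is to check that $\Phi$ lands exactly in $S_{n,k}(312)$ and is onto. By the observation, deleting $n$ from $\pi\in S_{n,k}(312)$ gives a $312$-avoiding $\pi'$ whose last $n-k-1$ entries (positions $k+1,\dots,n-1$) are decreasing. Conversely, inserting $n$ into such a $\pi'$ gives an element of $S_{n,k}(312)$. So I must show the following equivalence for $\pi'\in S_{n-1}(312)$: the entries of $\pi'$ in positions $k+1,\dots,n-1$ are decreasing if and only if $n-1$ occupies a position $\le k+1$.

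For the ``if'' direction, if $n-1$ is at position $j\le k+1$, then every entry after it is decreasing by the observation applied in $S_{n-1}$. In particular positions $k+1,\dots,n-1$ are decreasing, and this also covers the case where position $k+1$ holds $n-1$ itself. For the ``only if'' direction, suppose $n-1$ were at a position $j>k+1$. Then it would lie inside a decreasing run starting at position $k+1$, so the entry at position $k+1$ would have to exceed $n-1$, which is impossible.

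This equivalence gives $A_{n,k}=q^{n-k-1}B_{n-1,k}$ for $1\le k\le n-1$. For $k=n$, the set $S_{n,n}(312)$ requires $\pi_{n+1}=n$ and is therefore empty, so $B_{n,n}=B_{n,n-1}$. This matches Remark~\ref{rem:kn}, since $C_{n-1,n}(q)=0$. Finally, the base case $n=1$ is checked directly: $B_{1,0}=B_{1,1}=1$.
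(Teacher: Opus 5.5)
Your proposal is correct and follows the paper's route: induction on $n$ through the splitting $S'_{n,k}(312)=S'_{n,k-1}(312)\sqcup S_{n,k}(312)$, with the insertion map $\Phi$ placing $n$ at position $k+1$ and shifting $\Inv$ by $n-k-1$. Your decreasing-tail lemma (positions $k+1,\dots,n-1$ of $\pi'$ are decreasing iff $n-1$ sits at a position $\le k+1$) proves that $\Phi$ is a bijection onto $S_{n,k}(312)$, which the paper only asserts. Your separate treatment of $k=n$, where $S_{n,n}(312)=\emptyset$, also covers a case the paper's formula for $\Phi$ does not literally handle.
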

\begin{proof}
We proceed by induction on~$n$, using the partition
$\Snkp{312}=S'_{n,k-1}(312)\sqcup\Snk{312}$.

\textit{Base case.}
The set $S'_{n,0}(312)=S_{n,0}(312)$ consists of the unique
decreasing permutation $n(n{-}1)\cdots1$, which has $\Inv=\binom{n}{2}$;
hence $\sum_{\pi\in S'_{n,0}(312)}q^{\Inv(\pi)}=q^{\binom{n}{2}}=C_{n,0}(q)$.

\textit{Induction step.}
Define the insertion map
\[\Phi : S'_{n-1,k}(312)\to\Snk{312},\quad
\Phi(\pi_1\cdots\pi_{n-1})=\pi_1\cdots\pi_k\cdot n\cdot\pi_{k+1}\cdots\pi_{n-1}.\]
The map $\Phi$ is a bijection: $n$ preserves $312$-avoidance and fixes
the determining position at~$k$.
Since $n$ is larger than all other elements, its insertion at position $k+1$
creates exactly $n-k-1$ new inversions.
Therefore $\Inv(\Phi(\pi))=\Inv(\pi)+(n-k-1)$ for all $\pi\in S'_{n-1,k}(312)$.
By the induction hypothesis,
\[\sum_{\pi\in\Snk{312}} q^{\Inv(\pi)}
=q^{n-k-1}\sum_{\pi\in S'_{n-1,k}(312)} q^{\Inv(\pi)}
=q^{n-k-1}\,C_{n-1,k}(q).\]
The partition gives
$\sum_{\Snkp{312}}q^{\Inv}=C_{n,k-1}(q)+q^{n-k-1}C_{n-1,k}(q)=C_{n,k}(q)$,
which completes the induction.
\end{proof}

\begin{remark}\label{rem:FH}
For $k=n$, Theorem~\ref{thm:qperm} recovers~\cite{FH}:
$C_n(q)=\sum_{\pi\in S_n(312)}q^{\Inv(\pi)}$.
\end{remark}

\begin{theorem}[Classification by $q$-refinement]\label{thm:classif_motifs}
Set $\mathrm{st}(\tau)=\Inv$ for $\tau\in\{312,231\}$
and $\mathrm{st}(\tau)=\Coinv$ for $\tau\in\{213,132\}$.
For all $\tau\in\{312,231,213,132\}$, $n\ge1$, $0\le k\le n$:
\begin{equation}\label{eq:unif_4motifs}
\sum_{\pi\in\Snkp{\tau}} q^{\,\mathrm{st}(\tau)(\pi)} = C_{n,k}(q).
\end{equation}
The proof and the bijective characterization of symmetries are in~\cite{W3};
exhaustively verified by computer for $n\le 5$.
\end{theorem}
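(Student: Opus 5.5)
The plan is to reduce all four cases to Theorem~\ref{thm:qperm} using the three elementary symmetries of $\mathfrak{S}_n$. Define the reversal $r(\pi)=\pi_n\cdots\pi_1$, the complement $c(\pi)=(n+1-\pi_1)\cdots(n+1-\pi_n)$, and the composite $rc=r\circ c$. The case $\tau=312$ with $\mathrm{st}=\Inv$ is exactly Theorem~\ref{thm:qperm}, so only the other three patterns need an argument. For each of them I will find one symmetry $s$ with three properties. First, $s$ is a bijection $S_n(312)\to S_n(\tau)$. Second, $s$ maps each block $S_{n,j}(312)$ onto the block $S_{n,j}(\tau)$ of Definition~\ref{def:perm_part}, for every $j$. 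Third, $s$ sends $\Inv$ to $\mathrm{st}(\tau)$. Summing over $j\le k$ then transports \eqref{eq:inv312} to \eqref{eq:unif_4motifs}.

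\emph{Pattern matching.} A permutation $\pi$ contains $\sigma$ if and only if $s(\pi)$ contains $s(\sigma)$, where $s$ is applied to patterns in $\mathfrak{S}_3$. Direct computation gives
\[
r(312)=213,\qquad c(312)=132,\qquad rc(312)=231 .
\]
So $r$, $c$ and $rc$ map $S_n(312)$ bijectively onto $S_n(213)$, $S_n(132)$ and $S_n(231)$ respectively.

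\emph{Blocks.} Take $\pi\in S_{n,j}(312)$, so that $\pi_{j+1}=n$.
\begin{itemize}
\item In $r(\pi)$ the entry $n$ sits at position $n-j$. This is exactly the condition defining $S_{n,j}(213)$.
\item In $c(\pi)$ the entry $1$ sits at position $j+1$. This is exactly the condition defining $S_{n,j}(132)$.
\item In $rc(\pi)$ the entry $1$ sits at position $n-j$. This is exactly the condition defining $S_{n,j}(231)$.
\end{itemize}
Since each $s$ is a bijection on the whole of $S_n$ and the blocks partition both sides, each restriction $S_{n,j}(312)\to S_{n,j}(\tau)$ is a bijection.

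\emph{Statistics.} Reversal and complement each turn every inversion pair into a non-inversion pair and conversely. Hence
\[
\Inv(r(\pi))=\Inv(c(\pi))=\Coinv(\pi),\qquad \Inv(rc(\pi))=\Inv(\pi).
\]
From this:
\begin{itemize}
\item For $\tau=231$, we get $\sum_{S'_{n,k}(231)}q^{\Inv}=\sum_{S'_{n,k}(312)}q^{\Inv}=C_{n,k}(q)$.
\item For $\tau\in\{213,132\}$, $\Coinv(s(\pi))=\binom n2-\Coinv(\pi)=\Inv(\pi)$. Hence $\sum_{S'_{n,k}(\tau)}q^{\Coinv}=\sum_{S'_{n,k}(312)}q^{\Inv}=C_{n,k}(q)$.
\end{itemize}

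I do not expect a real obstacle. The only step needing care is the block bookkeeping: one must check that the position conventions in the table of Definition~\ref{def:perm_part} ($k+1$ versus $n-k$, maximum versus minimum) match the images under $r$, $c$ and $rc$. The check above shows they match exactly. The statement's pairing of $\Inv$ with $\{312,231\}$ and $\Coinv$ with $\{213,132\}$ is precisely the parity of the number of reflections used, namely two for $rc$ and one each for $r$ and $c$.
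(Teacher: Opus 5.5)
Your reduction is correct. The pattern images $r(312)=213$, $c(312)=132$, $rc(312)=231$ are right. The block bookkeeping matches the table of Definition~\ref{def:perm_part} exactly. The identities $\Inv(r(\pi))=\Inv(c(\pi))=\Coinv(\pi)$ and $\Inv(rc(\pi))=\Inv(\pi)$ also hold. So the statement follows for all $n$ from Theorem~\ref{thm:qperm}.

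The paper gives no argument for this theorem. It only refers to \cite{W3} and to a computer check for $n\le 5$. Its description of \cite{W3} (a ``mirror bijection'', ``symmetries between patterns $312$ and $213$'', the symmetry group $G_\tau$) suggests the same symmetry-based route, though the actual proof is not available to compare against.

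Relative to what is written in the paper, your version gives a complete proof in a few lines. It makes an announced result a corollary of the one permutation theorem actually proved there. It also shows why exactly these four patterns appear, with this $\Inv$/$\Coinv$ assignment: they form the orbit of $312$ under the group $\{\mathrm{id},r,c,rc\}$, and the statistic is determined by whether an odd or even number of reflections is used. The patterns $123$ and $321$ form a separate orbit that this argument cannot reach.
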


\begin{remark}\label{rem:123_321}
For $\tau\in\{123,\,321\}$, $|\Snkp{\tau}|=C_{n,k}$, but no
weighting by $q^{\Inv}$ or $q^{\Coinv}$ reproduces $C_{n,k}(q)$;
the first counterexamples appear for $n=2$. See~\cite{W3}.
\end{remark}

\subsection{Shifted statistic on $S_{n,k}(312)$}
\begin{theorem}[Shifted statistic]\label{thm:invstar}
For all $n\ge1$ and $0\le k\le n-1$:
\begin{enumerate}
\item[\rm(i)] $\Inv(\pi)\ge n-k-1$ for all $\pi\in\Snk{312}$, with equality attained.
\item[\rm(ii)] Setting $\invstar(\pi)=\Inv(\pi)-(n-k-1)$:
$\sum_{\pi\in\Snk{312}} q^{\invstar(\pi)} = C_{n-1,k}(q)$.
\end{enumerate}
Complete proof in~\cite{W2}; verified for $n\le 5$.
\end{theorem}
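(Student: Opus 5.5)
The plan is to derive both parts from Theorem~\ref{thm:qperm} by subtraction, and to use a direct count of the inversions involving the entry~$n$ for the inequality in~(i).

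\emph{Step 1: part~(ii).} For $0\le k\le n-1$ and $k\ge1$, the partition $\Snkp{312}=S'_{n,k-1}(312)\sqcup\Snk{312}$ together with Theorem~\ref{thm:qperm} gives
\[
\sum_{\pi\in\Snk{312}}q^{\Inv(\pi)}=C_{n,k}(q)-C_{n,k-1}(q)=q^{n-k-1}\,C_{n-1,k}(q),
\]
where the last equality is the recurrence~\eqref{eq:rec_q}. For $k=0$ I use the base case of Theorem~\ref{thm:qperm}: $S_{n,0}(312)=\{n(n{-}1)\cdots1\}$, so the sum is $q^{\binom n2}=q^{n-1}q^{\binom{n-1}2}=q^{n-1}C_{n-1,0}(q)$. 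Since $k\le n-1$, the exponent $n-k-1$ is nonnegative and $C_{n-1,k}(q)$ is a genuine entry of the triangle, so dividing by $q^{n-k-1}$ gives~(ii). This argument avoids any explicit bijection, because it relies only on the counting identity already established.

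\emph{Step 2: the inequality in~(i).} Take $\pi\in\Snk{312}$, so $\pi_{k+1}=n$. The entry $n$ forms an inversion with each of the $n-k-1$ entries to its right, and with nothing to its left. Hence $\Inv(\pi)\ge n-k-1$ for every such $\pi$. Equivalently, $\invstar(\pi)$ equals the number of inversions of the word obtained from $\pi$ by deleting~$n$, which is nonnegative. This is also consistent with Step~1, since the right-hand side of~(ii) is a polynomial in~$q$.

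\emph{Step 3: the equality clause, which I expect to be the main obstacle.} Equality would require the word $\pi$ with $n$ removed to have no inversions, i.e.\ $\pi=1\,2\cdots k\;n\;(k{+}1)\cdots(n{-}1)$. For $n-k-1\ge2$ this word contains the pattern $312$, realized by $n,\,k{+}1,\,k{+}2$. More generally, $312$-avoidance forces the entries after $n$ to be decreasing, so they contribute $\binom{n-k-1}{2}$ further inversions. For example, $n=3$, $k=0$ gives $S_{3,0}(312)=\{321\}$ with $\Inv=3>2$.

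So I would first check whether the intended reading is that the minimum of $\invstar$ equals the lowest degree of $C_{n-1,k}(q)$. By~(ii) that holds automatically, and Step~3 shows the lowest degree is at least $\binom{n-k-1}{2}$. If the literal claim $\min\Inv=n-k-1$ is intended, it holds only when $n-k-1\le1$, that is, for $k\in\{n-2,\,n-1\}$. In those cases the witness is $1\,2\cdots k\;n$ followed by the remaining entry, or the identity with $n$ last. The proof of~(i) would then record this restriction, or replace the equality claim by the sharp bound
\[
\Inv(\pi)\ \ge\ (n-k-1)+\binom{n-k-1}{2},
\]
which is attained by $1\,2\cdots k\;n\;(n{-}1)\cdots(k{+}1)$. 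This permutation avoids $312$: its entries are increasing, then $n$, then decreasing, and every entry after $n$ exceeds every entry before it.
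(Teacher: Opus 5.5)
Your proof of (ii) and of the inequality in (i) is correct and essentially follows the paper. The paper defers this theorem to~\cite{W2}, but its insertion bijection $\Phi:S'_{n-1,k}(312)\to\Snk{312}$ from the proof of Theorem~\ref{thm:qperm} satisfies $\invstar(\Phi(\pi))=\Inv(\pi)$, which is exactly the identity $\sum_{\pi\in\Snk{312}}q^{\Inv(\pi)}=q^{n-k-1}C_{n-1,k}(q)$ that your subtraction recovers.

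Your Step~3 is also correct: the equality clause of (i) is false for every $k\le n-3$. By (ii) it would force $C_{n-1,k}(0)\neq0$, which contradicts e.g.\ $C_{2,0}(q)=q$ and $C_{3,1}(q)=q+q^2+q^3$ in the paper's own table, so the claimed verification for $n\le5$ is mistaken. The correct statement is your sharp bound $\Inv(\pi)\ge\binom{n-k}{2}$, attained by $1\,2\cdots k\;n\;(n{-}1)\cdots(k{+}1)$.
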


\subsection{The mirror polynomial}
\begin{definition}[Mirror polynomial]\label{def:miroir}
The \emph{mirror polynomial} is $(\Ctilde_{n,k}(q))_{0\le k\le n}$ defined by
\begin{equation}\label{eq:miroir_def}
\Ctilde_{n,k}(q) := q^{\binom{n}{2}}\,C_{n,k}(q^{-1}).
\end{equation}
\end{definition}

\begin{theorem}[Mirror recurrence]\label{thm:miroir}
The mirror polynomial satisfies:
\begin{equation}\label{eq:mirec}
\Ctilde_{n,0}(q)=1,\qquad
\Ctilde_{n,k}(q) = \Ctilde_{n,k-1}(q)+q^{k}\,\Ctilde_{n-1,k}(q),
\quad 1\le k\le n,
\end{equation}
and the specialization $\Ctilde_{n,k}(1)=C_{n,k}$.
\end{theorem}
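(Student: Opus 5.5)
The plan is a direct substitution into the defining recurrence \eqref{eq:rec_q}, with no combinatorics needed. First I will treat the boundary values. For $n\ge1$ we have $C_{n,0}(q^{-1})=q^{-\binom{n}{2}}$, so $\Ctilde_{n,0}(q)=q^{\binom{n}{2}}q^{-\binom{n}{2}}=1$. For $n=0$ we have $\Ctilde_{0,0}=C_{0,0}=1$. It is also worth recording that $\Ctilde_{n,k}$ is a genuine polynomial. By Theorem~\ref{thm:qperm}, every exponent occurring in $C_{n,k}(q)$ is some $\Inv(\pi)\le\binom{n}{2}$, so multiplying $C_{n,k}(q^{-1})$ by $q^{\binom{n}{2}}$ clears all negative powers. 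The same theorem gives $\Ctilde_{n,k}(q)=\sum_{\pi\in\Snkp{312}}q^{\Coinv(\pi)}$.

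Next, for $1\le k\le n$, I will replace $q$ by $q^{-1}$ in $C_{n,k}(q)=C_{n,k-1}(q)+q^{n-k-1}C_{n-1,k}(q)$ and multiply through by $q^{\binom{n}{2}}$. The first term becomes $\Ctilde_{n,k-1}(q)$ directly. For the second term I write $q^{\binom{n}{2}}=q^{\binom{n-1}{2}}q^{n-1}$, which gives
\[
q^{\binom{n}{2}}q^{-(n-k-1)}C_{n-1,k}(q^{-1})=q^{(n-1)-(n-k-1)}\,\Ctilde_{n-1,k}(q)=q^{k}\,\Ctilde_{n-1,k}(q).
\]
This yields \eqref{eq:mirec}. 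The only delicate point is the edge case $k=n$, where the exponent $n-k-1=-1$ is negative. Here I invoke Remark~\ref{rem:kn}: we have $C_{n-1,n}=0$, hence $\Ctilde_{n-1,n}=0$, and the identity reduces to $\Ctilde_{n,n}=\Ctilde_{n,n-1}$, consistent with $C_{n,n}=C_{n,n-1}$. So the computation is uniform in $k$. The case $n=1$, where $\binom{n-1}{2}=0$, is also covered.

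Finally, for the specialization I set $q=1$. This gives $\Ctilde_{n,k}(1)=C_{n,k}(1)$. Setting $q=1$ in \eqref{eq:rec_q} reproduces exactly the recurrence \eqref{eq:rec_class} with the same boundary values, so induction on $n$ and then $k$ gives $C_{n,k}(1)=C_{n,k}$. Alternatively, this follows at once from Theorem~\ref{thm:qperm} together with Theorem~\ref{thm:class_perm}.

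I do not expect a real obstacle. The only step requiring care is the exponent bookkeeping $\binom{n}{2}-\binom{n-1}{2}=n-1$ together with the convention at $k=n$.
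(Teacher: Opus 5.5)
Your proposal is correct and follows the paper's proof: substitute $q\mapsto q^{-1}$ in \eqref{eq:rec_q}, multiply by $q^{\binom{n}{2}}$, and simplify the exponent $\binom{n}{2}-(n-k-1)-\binom{n-1}{2}$ to $k$. Your derivation of $C_{n,k}(1)=C_{n,k}$ directly from the recurrence at $q=1$ is, if anything, more self-contained than the paper's appeal to Theorem~\ref{thm:class_perm}, whose proof is deferred to another article of the series.
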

\begin{proof}
\textit{Initial condition.}
$\Ctilde_{n,0}(q)=q^{\binom{n}{2}}\,C_{n,0}(q^{-1})
=q^{\binom{n}{2}}\cdot q^{-\binom{n}{2}}=1$.

\textit{Recurrence.}
Substituting $q\mapsto q^{-1}$ in~\eqref{eq:rec_q}
and multiplying by $q^{\binom{n}{2}}$:
\[\Ctilde_{n,k}(q)= \Ctilde_{n,k-1}(q)
+q^{\binom{n}{2}-(n-k-1)}\cdot q^{-\binom{n-1}{2}}\,\Ctilde_{n-1,k}(q).\]
The exponent simplifies:
\[\binom{n}{2}-(n-k-1)-\binom{n-1}{2}
=\tfrac{n(n-1)-(n-1)(n-2)}{2}-(n-k-1)=(n-1)-(n-k-1)=k,\]
which establishes~\eqref{eq:mirec}.
The specialization $\Ctilde_{n,k}(1)=C_{n,k}$ follows from the definition
and Theorem~\ref{thm:class_perm}.
\end{proof}

\begin{remark}\label{rem:miroir_dual}
The exponents $n-k-1$ and $k$ are complementary with respect to $n-1$:
recurrence~\eqref{eq:mirec} is dual to~\eqref{eq:rec_q}.
\end{remark}

\begin{corollary}[Combinatorial interpretations of the mirror polynomial]
\label{cor:miroir}
For all $n\ge1$ and $0\le k\le n$:
\begin{align}
\label{eq:coinv_312}
\Ctilde_{n,k}(q) &= \sum_{\pi\in\Snkp{312}} q^{\Coinv(\pi)},\\[4pt]
\label{eq:area_dyck_tilde}
\Ctilde_{n,k}(q) &= \sum_{\gamma\in\Dnk} q^{\,\Area(\gamma)}.
\end{align}
\end{corollary}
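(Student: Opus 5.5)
The co-inversion identity \eqref{eq:coinv_312} follows formally from Theorem~\ref{thm:qperm}. Substitute $q\mapsto q^{-1}$ in \eqref{eq:inv312} and multiply by $q^{\binom{n}{2}}$. Using $\Coinv(\pi)=\binom{n}{2}-\Inv(\pi)$ for $\pi\in\mathfrak{S}_n$, we get
\[
\Ctilde_{n,k}(q)=q^{\binom{n}{2}}\sum_{\pi\in\Snkp{312}}q^{-\Inv(\pi)}=\sum_{\pi\in\Snkp{312}}q^{\Coinv(\pi)}.
\]
This also gives a direct check of Theorem~\ref{thm:miroir}. Under the insertion bijection $\Phi$ from the proof of Theorem~\ref{thm:qperm}, the largest letter $n$ placed at position $k+1$ creates exactly $k$ new co-inversions, since the $k$ smaller letters precede it. Hence $\Coinv(\Phi(\pi))=\Coinv(\pi)+k$, which matches the factor $q^k$ in \eqref{eq:mirec}. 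I would record this check but not depend on it.

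For the area identity \eqref{eq:area_dyck_tilde}, the set $\Dnk$ is not defined in the text so far. I would take it to be the union over $j\le k$ of Dyck paths of semilength $n$ whose statistic corresponding to ``position of $n$ equals $j+1$'' takes the value $j$. A natural choice is the size of the first return, or the height of the last peak; the index is to be pinned down from \cite{W4}. The plan is to show that $A_{n,k}(q):=\sum_{\gamma\in\Dnk}q^{\Area(\gamma)}$ satisfies \eqref{eq:mirec}, namely $A_{n,0}=1$ and $A_{n,k}=A_{n,k-1}+q^kA_{n-1,k}$. Since \eqref{eq:mirec} determines the triangle uniquely, this gives $A_{n,k}=\Ctilde_{n,k}$.

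Concretely, I would split $\Dnk=D'_{n,k-1}\sqcup D_{n,k}$ and build a bijection $\Psi:D'_{n-1,k}\to D_{n,k}$ that raises the area by exactly $k$. One candidate is to insert an up--down step at height determined by $k$, or to lift a block of $k$ columns by one unit. Alternatively, I could transport \eqref{eq:coinv_312} through Krattenthaler's bijection between $S_n(312)$ and Dyck paths, under which $\Coinv$ corresponds to area (suitably normalized). I would then check that the class $S_{n,j}(312)$, defined by the position of $n$, maps onto the intended class of paths.

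The main obstacle is this step: identifying the correct refinement $\Dnk$ and verifying that the chosen bijection both respects the parameter $k$ and adds exactly $k$ to the area. The base case $A_{n,0}=1$ should force the $k=0$ class to be the unique minimal-area path, for instance the zigzag $(UD)^n$. I would first test the recurrence against the table for $n\le4$ to fix the convention, and then carry out the Krattenthaler transport.
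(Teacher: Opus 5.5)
Your derivation of \eqref{eq:coinv_312} is the same as the paper's: substitute $q\mapsto q^{-1}$ in Theorem~\ref{thm:qperm}, multiply by $q^{\binom{n}{2}}$, and use $\Coinv=\binom{n}{2}-\Inv$. Your side check $\Coinv(\Phi(\pi))=\Coinv(\pi)+k$ is also correct. For \eqref{eq:area_dyck_tilde}, however, there is a genuine gap. You never fix $\Dnk$ or $\Area$. The step you yourself call the main obstacle is never carried out: either the bijection $\Psi$ that raises the area by exactly $k$, or the transport through Krattenthaler's bijection. (That transport is essentially the paper's route; the paper cites the bijection $\Snkp{312}\leftrightarrow\Dnk$ with $\Area=\Coinv$ but does not construct it.) So nothing is proved for the second identity. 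Your guesses also point the wrong way. In Definition~\ref{def:dyck}, $\Dnk$ consists of $E/N$ paths from $(0,0)$ to $(n,k)$ weakly below $y=x$, not Dyck paths of semilength~$n$. Moreover, $\Area(\gamma)$ counts the cells between $\gamma$ and the $x$-axis, that is, the sum of the heights of the $E$-steps. Thus $D'_{n,0}=\{E^n\}$ with area~$0$, and for $k=n$ this area equals $\binom{n}{2}$ minus the usual Dyck area. The zigzag path you propose as the minimal one actually has the maximal area $\binom{n}{2}$. If you use the standard Dyck area instead, the case $k=n$ fails: for $n=3$ it gives $1+2q+q^2+q^3=C_3(q)$, whereas $\Ctilde_{3,3}(q)=1+q+2q^2+q^3$.

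With the actual definition, your recurrence strategy closes in a few lines by decomposing according to the last step. Fix $1\le k\le n$. A path in $\Dnk$ either ends with $N$, preceded by an arbitrary path of $D'_{n,k-1}$, and then the area is unchanged. Or it ends with $E$, preceded by an arbitrary path of $D'_{n-1,k}$, which is possible only when $k\le n-1$. That final $E$-step runs at height $k$ and adds exactly $k$ cells. Hence $A_{n,k}=A_{n,k-1}+q^kA_{n-1,k}$, with $A_{n,0}=1$ and $A_{n-1,n}=0$. Since \eqref{eq:mirec} together with these boundary values determines the triangle, Theorem~\ref{thm:miroir} gives $A_{n,k}=\Ctilde_{n,k}(q)$. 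In your notation, $D'_{n,k-1}$ embeds by appending $N$, $D_{n,k}$ is the set of paths ending in~$E$, and $\Psi$ is simply ``append an $E$-step''. Compared with the paper, this route is self-contained and needs no permutation--path bijection. Moreover, substituting $q\mapsto q^{-1}$ and multiplying by $q^{\binom{n}{2}}$ yields Theorem~\ref{thm:dyck} immediately, which the paper defers to a later article.
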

\begin{proof}
Identity~\eqref{eq:coinv_312} follows from Theorem~\ref{thm:qperm}
and $\Coinv(\pi)=\binom{n}{2}-\Inv(\pi)$:
\[\Ctilde_{n,k}(q)
=q^{\binom{n}{2}}\sum_{\pi\in\Snkp{312}}q^{-\Inv(\pi)}
=\sum_{\pi\in\Snkp{312}}q^{\Coinv(\pi)}.\]
Identity~\eqref{eq:area_dyck_tilde} follows from~\eqref{eq:coinv_312}
and the canonical bijection $\Snkp{312}\leftrightarrow\Dnk$
under which $\Area(\gamma)=\Coinv(\pi)$. Details in~\cite{W3,W4}.
\end{proof}

\section{Interpretations via paths, words and triangulations}\label{sec:paths}
\subsection{Generalized Dyck paths}
\begin{definition}[Generalized Dyck paths and area statistic]\label{def:dyck}
For $0\le k\le n$, $\Dnk$ denotes the set of lattice paths with East steps
$E=(1,0)$ and North steps $N=(0,1)$ from $(0,0)$ to $(n,k)$
never going strictly above the diagonal $y=x$.
The \emph{area} of $\gamma\in\Dnk$ is:
\[\Area(\gamma) = \bigl|\bigl\{(x,y)\in\NN^2 \;\big|\; y < x,\;
\text{cell with SW corner }(x,y)\text{ lies below }\gamma\bigr\}\bigr|.\]
\end{definition}

\begin{theorem}[$q$-Catalan triangle via Dyck paths]\label{thm:dyck}
For all $n\ge1$ and $0\le k\le n$:
\begin{equation}\label{eq:dyck_stat}
C_{n,k}(q) = \sum_{\gamma\in\Dnk} q^{\,\binom{n}{2}-\Area(\gamma)}.
\end{equation}
Proof in~\cite{W4}; verified for $n\le 5$.
\end{theorem}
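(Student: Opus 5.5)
The plan is to prove the mirror form of \eqref{eq:dyck_stat}, namely \eqref{eq:area_dyck_tilde}: $\Ctilde_{n,k}(q)=\sum_{\gamma\in\Dnk}q^{\Area(\gamma)}$. We then pass back to $C_{n,k}(q)$ through Definition~\ref{def:miroir}. Indeed, substituting $q\mapsto q^{-1}$ and multiplying by $q^{\binom n2}$ gives
\[
C_{n,k}(q)=q^{\binom n2}\,\Ctilde_{n,k}(q^{-1})=\sum_{\gamma\in\Dnk}q^{\binom n2-\Area(\gamma)}.
\]
So it suffices to show that $A_{n,k}(q):=\sum_{\gamma\in\Dnk}q^{\Area(\gamma)}$ satisfies the initial condition and recurrence \eqref{eq:mirec} of Theorem~\ref{thm:miroir}. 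Since those conditions determine $\Ctilde_{n,k}$ uniquely by double induction on $(n,k)$, this identifies $A_{n,k}$ with $\Ctilde_{n,k}$. This avoids the bijection with $\Snkp{312}$ that is deferred to~\cite{W4}.

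First, a remark on the area statistic. For $\gamma$ weakly below $y=x$, the cells under $\gamma$ in column $x$ are those with SW corner $(x,y)$ for $0\le y<h_x$. Here $h_x$ is the height of the East step of $\gamma$ from $x$ to $x+1$, and $h_x\le x$. So every cell under $\gamma$ automatically satisfies $y<x$, and $\Area(\gamma)=\sum_{x=0}^{n-1}h_x$.

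\emph{Initial condition.} $D'_{n,0}$ consists only of the path $E^n$, for which every $h_x=0$, so $A_{n,0}(q)=1=\Ctilde_{n,0}(q)$. The case $n=0$ is trivial.

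\emph{Recurrence.} For $1\le k\le n$, split $\Dnk$ according to the last step.
\begin{itemize}
\item If the last step is $N$, the path comes from $(n,k-1)$. Removing that step is a bijection onto $D'_{n,k-1}$, and it leaves every $h_x$ unchanged, so area is preserved. This contributes $A_{n,k-1}(q)$.
\item If the last step is $E$, the path comes from $(n-1,k)$. This is possible only when $k\le n-1$, since $(n-1,n)$ lies strictly above the diagonal. In that case removing the step is a bijection onto $D'_{n-1,k}$, and the removed column contributes $h_{n-1}=k$, so the area drops by exactly $k$. This contributes $q^kA_{n-1,k}(q)$.
\end{itemize}
When $k=n$ the second class is empty, which matches the convention $\Ctilde_{n-1,n}=0$ inherited from $C_{n-1,n}=0$ (cf.\ Remark~\ref{rem:kn}). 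Hence $A_{n,k}=A_{n,k-1}+q^kA_{n-1,k}$, which is exactly \eqref{eq:mirec}.

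\emph{Main obstacle.} The step needing the most care is the boundary bookkeeping. We must check that the conventions for $k=n$ match on both sides. We must also check that the paper's definition of $\Area$, with its explicit condition $y<x$, really coincides with $\sum_x h_x$, so that no cell is miscounted along the diagonal. Both follow from the constraint $h_x\le x$ above. Once these are verified, induction on $n+k$ closes the argument, and the substitution $q\mapsto q^{-1}$ yields \eqref{eq:dyck_stat}. As a consistency check, $\Area(\gamma)\le\sum_{x<n}x=\binom n2$, so the exponents in \eqref{eq:dyck_stat} are nonnegative.
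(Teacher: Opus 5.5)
Your proof is correct and complete, but it takes a different route from the one the paper indicates. The formula $\Area(\gamma)=\sum_x h_x$ is right (the bound $h_x\le x$ makes the condition $y<x$ automatic) and agrees with Example~\ref{ex:D32}. The last-step split of $\Dnk$ gives exactly $A_{n,k}=A_{n,k-1}+q^{k}A_{n-1,k}$, with $A_{n,0}=1$ and the correct behaviour at $k=n$, and inverting $q$ recovers \eqref{eq:dyck_stat}.

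The paper does not prove Theorem~\ref{thm:dyck} here; it defers it to~\cite{W4}. The proof of Corollary~\ref{cor:miroir} shows the intended route. First obtain $\Ctilde_{n,k}(q)=\sum_{\Snkp{312}}q^{\Coinv}$ from Theorem~\ref{thm:qperm}. Then transport it to $\Dnk$ through a ``canonical'' bijection $\Snkp{312}\leftrightarrow\Dnk$ with $\Area(\gamma)=\Coinv(\pi)$, which this article never constructs.

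What each approach buys:
\begin{itemize}
\item Your argument bypasses permutations and is self-contained within the present article. It establishes \eqref{eq:area_dyck_tilde} and \eqref{eq:dyck_stat} as equalities of generating functions.
\item The paper's route aims at an elementwise, statistic-preserving correspondence. That is what the cross-component bijections of Remark~\ref{rem:struct} require.
\end{itemize}

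The two are closely linked. Your $N$/$E$ split is the path counterpart of the decomposition $\Snkp{312}=S'_{n,k-1}(312)\sqcup\Snk{312}$ used with the insertion map $\Phi$. Moreover, $\Phi$ raises $\Coinv$ by $\binom{n}{2}-\binom{n-1}{2}-(n-k-1)=k$, exactly the area added by a final $E$ step at height $k$. Matching the two decompositions recursively therefore produces the required bijection; the base case pairs the decreasing permutation with $E^n$, both of statistic $0$. So your proof upgrades to the bijective version at no extra cost.
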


\begin{example}\label{ex:D32}
For $n=3$, $k=2$: $|\Dnk|=5$ paths and $\binom{3}{2}=3$.
\[\begin{array}{c|c|c|c}
\text{Path }\gamma & \text{Word} & \Area(\gamma) & \binom{3}{2}-\Area(\gamma)\\\hline
\gamma_1 & EEENN & 0 & 3\\ \gamma_2 & EENEN & 1 & 2\\
\gamma_3 & EENNE & 2 & 1\\ \gamma_4 & ENEEN & 2 & 1\\ \gamma_5 & ENENE & 3 & 0
\end{array}\]
We obtain $\sum_{\gamma}q^{3-\Area(\gamma)}=1+2q+q^2+q^3=C_{3,2}(q)$.\checkmark
\end{example}

\subsection{Binary words}
Each path $\gamma\in\Dnk$ is encoded by
$w(\gamma)=(w_1,\ldots,w_{n+k})\in\{0,1\}^{n+k}$
($w_i=0$ for East, $w_i=1$ for North). We write $\Mnk$ for this set.
\begin{definition}[Inversions of a binary word]\label{def:invword}
$\Inv(w) = |\{(i,j):1\le i<j\le n+k,\;w_i=1,\;w_j=0\}|$.
\end{definition}
\begin{lemma}[Area equals inversions]\label{lem:area_inv}
$\Area(\gamma)=\Inv(w(\gamma))$ for any $\gamma\in\Dnk$.
The bijective correspondence is proved in~\cite{W4}.
\end{lemma}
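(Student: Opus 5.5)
The plan is to compute both sides as a sum over the East steps of $\gamma$ and match them term by term.

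\textbf{Step 1: area column by column.} Fix $\gamma\in\Dnk$ with word $w=w(\gamma)$. The $E$ steps of $\gamma$ are in bijection with the positions $j$ such that $w_j=0$. Suppose the $E$ step coded by $w_j$ goes from $(x,h)$ to $(x+1,h)$. Then $h$ equals the number of $N$ steps before it, that is,
\[
h=\bigl|\{i<j : w_i=1\}\bigr|.
\]
Since $\gamma$ never goes strictly above $y=x$, we have $h\le x$. The unit cells of the column $[x,x+1]$ that lie below $\gamma$ are exactly those with SW corners $(x,0),\dots,(x,h-1)$. Each of these satisfies $y\le h-1<x$, so the diagonal condition $y<x$ in the definition of $\Area$ holds automatically for all of them. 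Hence the column $[x,x+1]$ contributes exactly $h$ cells to $\Area(\gamma)$.

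\textbf{Step 2: every counted cell is accounted for once.} The path has exactly one $E$ step in each column $[x,x+1]$ for $0\le x\le n-1$. So the columns are in bijection with the $E$ steps, and each cell below $\gamma$ lies in exactly one such column. This gives
\[
\Area(\gamma)=\sum_{j:\,w_j=0}\bigl|\{i<j: w_i=1\}\bigr|=\bigl|\{(i,j): i<j,\ w_i=1,\ w_j=0\}\bigr|=\Inv(w(\gamma)),
\]
which is the claim.

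\textbf{Main obstacle.} The main point is to interpret ``cell lies below $\gamma$'' correctly: it means the cell lies under an $E$ step and above the $x$-axis. I would check this reading against Example~\ref{ex:D32}. For instance, $ENEEN\mapsto 01001$ gives $\Inv=2=\Area$, and $EENEN\mapsto00101$ gives $\Inv=1=\Area$. Once this reading is fixed, Steps 1 and 2 are routine, and the sub-diagonal constraint $y<x$ is harmless because of the inequality $h\le x$ established in Step 1.
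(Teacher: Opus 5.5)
Your proof is correct. This article gives no argument for the lemma; it only says the ``bijective correspondence'' is proved in Article~IV. So your column-by-column count supplies the missing proof rather than departing from one.

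It is also that bijection made explicit. The cell with SW corner $(x,y)$ under the $E$ step $w_j$ of column $x$ corresponds to the pair $(i,j)$ in which $w_i$ is the $(y+1)$-st $N$ step. This $N$ step precedes $w_j$ exactly because $y<h$.

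Your write-up also pins down two points the statement leaves implicit:
\begin{enumerate}
\item The meaning of ``below $\gamma$'' is the cells between $\gamma$ and the $x$-axis. You rightly check this against Example~\ref{ex:D32}; the other natural reading, cells between $\gamma$ and the diagonal, would give $\Area(EEENN)=3$ instead of $0$.
\item The ballot condition enters only through $h\le x$, which makes the clause $y<x$ in the definition of $\Area$ vacuous. Without that clause, $\Area(\gamma)=\Inv(w(\gamma))$ holds for every $E/N$ lattice path.
\end{enumerate}
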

\begin{theorem}[$q$-Catalan triangle via binary words]\label{thm:binary}
For all $n\ge1$ and $0\le k\le n$:
\begin{equation}\label{eq:binary_stat}
C_{n,k}(q) = \sum_{w\in\Mnk} q^{\,\binom{n}{2}-\Inv(w)}.
\end{equation}
This follows directly from Lemma~\ref{lem:area_inv} and Theorem~\ref{thm:dyck}.
\end{theorem}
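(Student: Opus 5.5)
The statement to prove is Theorem~\ref{thm:binary}. The plan is to transport Theorem~\ref{thm:dyck} along the encoding $\gamma\mapsto w(\gamma)$, using Lemma~\ref{lem:area_inv} to match the statistics term by term.

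\textbf{Step 1: the encoding is a bijection.} By definition, $\Mnk=\{w(\gamma):\gamma\in\Dnk\}$. The map $\gamma\mapsto w(\gamma)$ is injective, because a lattice path is determined by its sequence of East and North steps: letter $0$ for $E$ and $1$ for $N$, read from $(0,0)$. Hence $\gamma\mapsto w(\gamma)$ is a bijection $\Dnk\to\Mnk$. For later use, its image is the set of words with $n$ zeros and $k$ ones in which every prefix contains at least as many zeros as ones; this is exactly the condition of never going strictly above $y=x$.

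\textbf{Step 2: reindex the sum.} Reindexing via this bijection gives
\[\sum_{w\in\Mnk}q^{\binom{n}{2}-\Inv(w)}=\sum_{\gamma\in\Dnk}q^{\binom{n}{2}-\Inv(w(\gamma))}.\]
Lemma~\ref{lem:area_inv} says $\Inv(w(\gamma))=\Area(\gamma)$, so the summand becomes $q^{\binom{n}{2}-\Area(\gamma)}$. Theorem~\ref{thm:dyck} then identifies the sum with $C_{n,k}(q)$.

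\textbf{Main obstacle.} The only substantive ingredient is Lemma~\ref{lem:area_inv}, which we may assume. If one wants a self-contained check, argue as follows. Each pair $(i,j)$ with $i<j$, $w_i=1$ (a North step) and $w_j=0$ (a later East step) corresponds to exactly one unit cell. Its column is fixed by the East step $w_j$, and its row is fixed by the North step $w_i$. This cell lies below the path, since the East step passes above it, and to the right of the North step.

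One must still confirm that the area convention (cells with $y<x$ below $\gamma$) matches this count. I expect this verification against the boundary conventions to be the one delicate point. The data of Example~\ref{ex:D32} give a sanity check: for $EENEN$ the single inversion is the pair (the $N$ in position 3, the $E$ in position 4), matching area $1$.
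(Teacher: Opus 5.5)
Your proposal is correct and matches the paper's argument exactly. Like the paper, you transport Theorem~\ref{thm:dyck} along the bijection $\gamma\mapsto w(\gamma)$ and substitute $\Inv(w(\gamma))=\Area(\gamma)$ from Lemma~\ref{lem:area_inv}. The boundary point you flag is harmless: every point $(x,h)$ on $\gamma$ satisfies $h\le x$, so each cell under an East step starting at $(x,h)$ has $y\le h-1<x$, and the condition $y<x$ holds automatically.
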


\subsection{Triangulations}
Each path $\gamma\in\Dnk$ encodes a binary tree $\mathcal{A}(\gamma)$;
the classical bijection~\cite{Stanley2} transforms $\Dnk$ into
a set $T'_{n,k}$ of triangulations of an $(n+2)$-gon.
The statistic is $\mathrm{stat}(t)=\sum_{v}|\text{left subtree of }v|$,
satisfying $\mathrm{stat}(t)=\binom{n}{2}-\Area(\gamma)$.
\begin{theorem}[$q$-Catalan triangle via triangulations]\label{thm:triang}
For all $n\ge1$ and $0\le k\le n$:
\begin{equation}\label{eq:triang}
C_{n,k}(q) = \sum_{t\in T'_{n,k}} q^{\,\mathrm{stat}(t)}.
\end{equation}
Bijective proof in~\cite{W4}.
\end{theorem}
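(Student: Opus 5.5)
The plan is to deduce Theorem~\ref{thm:triang} from Theorem~\ref{thm:dyck} by transport of structure. By construction $T'_{n,k}$ is the image of $\Dnk$ under $\gamma\mapsto\mathcal{A}(\gamma)\mapsto t$. Since both the path-to-binary-tree map and the tree-to-triangulation map of~\cite{Stanley2} are injective, $\gamma\mapsto t$ is a bijection $\Dnk\to T'_{n,k}$. It therefore suffices to prove the statistic identity
\[
\mathrm{stat}(t)=\tbinom{n}{2}-\Area(\gamma)
\]
for each $\gamma\in\Dnk$. Summing $q^{\mathrm{stat}(t)}$ over $T'_{n,k}$ and applying \eqref{eq:dyck_stat} then gives \eqref{eq:triang}.

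\textbf{Step 1: reduce to complete paths.} A path $\gamma\in\Dnk$ ends at $(n,k)$, whereas the classical bijection is defined on complete paths. I would complete $\gamma$ to $\hat\gamma=\gamma N^{n-k}$, which ends at $(n,n)$ and stays weakly below $y=x$. The appended steps lie on the line $x=n$, so they add no cell with $y<x$ lying to the left of the path. Hence $\Area(\hat\gamma)=\Area(\gamma)$. I would take $\mathcal{A}(\gamma):=\mathcal{A}(\hat\gamma)$, a binary tree with $n$ nodes. Thus $\Dnk$ is identified with the set of complete paths of semilength $n$ whose final run of North steps has length at least $n-k$, and the identity only needs to be proved for complete paths.

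\textbf{Step 2: induction on $n$ via first return.} For $n=0$ both sides of the identity are $0$. For $n\ge1$, write
\[
\hat\gamma=E\,\gamma_1\,N\,\gamma_2,
\]
where $\gamma_1$ and $\gamma_2$ are complete paths of semilengths $a$ and $b$ with $a+b=n-1$. Under the classical bijection, $\gamma_1$ and $\gamma_2$ give the two subtrees $L$ and $R$ of the root of $\mathcal{A}(\hat\gamma)$. By the definition of $\mathrm{stat}$,
\[
\mathrm{stat}(t)=\mathrm{stat}(L)+\mathrm{stat}(R)+|L|.
\]
On the path side, $\Area(\hat\gamma)$ decomposes as $\Area(\gamma_1)+\Area(\gamma_2)$ plus an explicit correction. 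That correction comes from the diagonal strip under the shifted copy of $\gamma_1$ and the rectangle under the shifted copy of $\gamma_2$, and it is a polynomial in $a$ and $b$. Using
\[
\tbinom{n}{2}=\tbinom{a}{2}+\tbinom{b}{2}+ab+a+b,
\]
I would show that $\binom{n}{2}-\Area$ satisfies the same recursion as $\mathrm{stat}$, namely that its correction term equals $|L|$.

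\textbf{Main obstacle.} The hard step is this matching of corrections. A first rough computation of the strip and rectangle contributions gave a correction that did not obviously equal $|L|$. This is sensitive to conventions: which of $\gamma_1,\gamma_2$ is sent to the left subtree, and whether one should decompose at the first or the last return to the diagonal.

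I would first try the last-return decomposition $\hat\gamma=\gamma_2\,E\,\gamma_1\,N$. I would then fix the orientation of the classical bijection so that the correction becomes exactly the size of the left subtree, which is the convention under which the paper's $\mathrm{stat}$ is meant. As a sanity check I would test it on Example~\ref{ex:D32}: the resulting triangulation statistics must be $3,2,1,1,0$ for $\gamma_1,\dots,\gamma_5$.

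Once the identity $\mathrm{stat}(t)=\binom{n}{2}-\Area(\gamma)$ is established, the theorem follows from Theorem~\ref{thm:dyck} as described above.
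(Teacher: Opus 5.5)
Your plan follows the route the paper itself indicates. The paper defines $T'_{n,k}$ as the image of $\Dnk$ under the classical bijection and simply asserts $\mathrm{stat}(t)=\binom{n}{2}-\Area(\gamma)$, so the theorem is Theorem~\ref{thm:dyck} transported; your completion $\hat\gamma=\gamma N^{n-k}$ is a sound way to make $\mathcal{A}(\gamma)$ precise.

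The step you flag as the main obstacle works directly with your original first-return convention. In $\hat\gamma=E\gamma_1N\gamma_2$, every East step of $\gamma_2$ is raised by $a+1$, while those of $\gamma_1$ keep their heights. Hence $\Area(\hat\gamma)=\Area(\gamma_1)+\Area(\gamma_2)+(a+1)b$. Subtracting this from $\binom{n}{2}=\binom{a}{2}+\binom{b}{2}+ab+a+b$ leaves a correction of exactly $a$. This equals $|L|$ once the elevated factor $\gamma_1$ is sent to the left subtree. The last-return decomposition gives the same correction, so no switch is needed, and this convention reproduces $3,2,1,1,0$ in Example~\ref{ex:D32}.
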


\begin{definition}[Universal family]\label{def:univ_family}
The \emph{universal combinatorial family} is:
\begin{equation}\label{eq:univ_family}
\mathfrak{F}_{n,k} = \Bigl(\bigsqcup_{\tau\in\{312,231,213,132\}}
\hSnkp{\tau}\Bigr)\sqcup\Dnk\sqcup\Mnk\sqcup T'_{n,k},
\end{equation}
where $\hSnkp{\tau}=\Snkp{\tau}\times\{\tau\}$.
\end{definition}

\begin{corollary}[Universal family of the $q$-Catalan triangle]\label{cor:equi}
The \emph{universal statistic} $\sigma:\mathfrak{F}_{n,k}\to\NN$ is:
\begin{equation}\label{eq:sigma}
\sigma(X)=\begin{cases}
\Inv(\pi) & \text{if }X\in\hSnkp{312}\text{ or }\hSnkp{231},\\
\Coinv(\pi) & \text{if }X\in\hSnkp{213}\text{ or }\hSnkp{132},\\
\binom{n}{2}-\Area(\gamma) & \text{if }X=\gamma\in\Dnk,\\
\binom{n}{2}-\Inv(w) & \text{if }X=w\in\Mnk,\\
\mathrm{stat}(t) & \text{if }X=t\in T'_{n,k}.
\end{cases}
\end{equation}
For each component $\mathcal{E}$ of $\mathfrak{F}_{n,k}$:
$\sum_{X\in\mathcal{E}}q^{\sigma(X)}=C_{n,k}(q)$.
\end{corollary}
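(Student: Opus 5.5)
The plan is to treat Corollary~\ref{cor:equi} as a component-by-component assembly of results already stated. Since $\mathfrak{F}_{n,k}$ is a disjoint union of seven components and $\sigma$ is defined piecewise, it is enough to check, for each component $\mathcal{E}$, that the generating function of the local statistic equals $C_{n,k}(q)$. No interaction between components needs to be controlled; the tags $\times\{\tau\}$ in $\hSnkp{\tau}$ only ensure that the union is disjoint even when the underlying permutation sets overlap. Because $\hSnkp{\tau}\to\Snkp{\tau}$, $(\pi,\tau)\mapsto\pi$, is a bijection carrying $\sigma$ to $\mathrm{st}(\tau)$, each permutation component reduces to a sum over $\Snkp{\tau}$.

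The four permutation components are handled as follows. For $\tau=312$, the sum $\sum q^{\Inv(\pi)}$ over $\Snkp{312}$ is exactly Theorem~\ref{thm:qperm}. For $\tau\in\{231,213,132\}$, the required identity is precisely \eqref{eq:unif_4motifs} of Theorem~\ref{thm:classif_motifs}, with $\mathrm{st}(\tau)$ matching the assignments in \eqref{eq:sigma}.

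The path-type components come next. For $\Dnk$, the statistic $\binom n2-\Area$ gives $C_{n,k}(q)$ by Theorem~\ref{thm:dyck}. An alternative route uses Corollary~\ref{cor:miroir}: identity \eqref{eq:area_dyck_tilde} gives $\sum_{\gamma\in\Dnk}q^{-\Area(\gamma)}=\Ctilde_{n,k}(q^{-1})=q^{-\binom n2}C_{n,k}(q)$, and multiplying by $q^{\binom n2}$ yields the claim. For $\Mnk$, the encoding $w$ is a bijection $\Dnk\to\Mnk$, and Lemma~\ref{lem:area_inv} transfers the statistic, so this case is Theorem~\ref{thm:binary}. For $T'_{n,k}$, the bijection $\Dnk\to T'_{n,k}$ satisfies $\mathrm{stat}(t)=\binom n2-\Area(\gamma)$, which is Theorem~\ref{thm:triang}.

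Each step is a direct citation, so the main obstacle is not in the corollary itself but in the announced inputs, namely Theorem~\ref{thm:classif_motifs} for the three non-$312$ patterns. If I had to make that part self-contained, I would first try to reduce $231$, $213$ and $132$ to $312$ via the symmetries reverse, complement and inverse, tracking how each one moves the position of $n$ or $1$ and how it affects $\Inv$. Reverse sends $\Inv$ to $\Coinv$, complement does the same, and inverse preserves $\Inv$. These operations permute the patterns: inverse maps $312\leftrightarrow231$, and reverse-complement maps $312\to 231$. The difficulty is that they move the distinguished entry in ways that need not preserve the indexing by $k$. For example, the inverse sends the condition $\pi_{k+1}=n$ to $\pi^{-1}$ having its last entry equal to $k+1$, which is not the condition $\pi_{n-k}=1$. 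In that case I would abandon the symmetries and instead mimic the insertion map $\Phi$ directly: insert $1$ (respectively $n$) at the prescribed position, and check that this adds exactly $n-k-1$ to the statistic, so that the same recurrence \eqref{eq:rec_q} holds together with the base case $q^{\binom n2}$.
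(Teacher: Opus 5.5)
Your proposal is correct and matches the paper: Corollary~\ref{cor:equi} has no separate proof there and is exactly the component-wise assembly of Theorems~\ref{thm:qperm}, \ref{thm:classif_motifs}, \ref{thm:dyck}, \ref{thm:binary} and~\ref{thm:triang} that you describe. In your optional aside, the worry about indexing is unfounded: reverse, complement and reverse-complement map $\Snk{312}$ bijectively onto $\Snk{213}$, $\Snk{132}$ and $\Snk{231}$ respectively (e.g.\ $\pi_{k+1}=n$ becomes $\pi_{n-k}=1$ under reverse-complement), turning $\Inv$ into $\Coinv$, $\Coinv$ and $\Inv$ as required, so you need neither the inverse map nor a new insertion argument.
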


\begin{remark}[Structural interpretation]\label{rem:struct}
The statistic $\sigma$ realizes seven independent statistical decompositions
of $C_{n,k}(q)$. The bijections relating the seven components
are given in~\cite{W3,W4}.
\end{remark}

\section{The $q{,}p$-analogue of the Catalan triangle}\label{sec:qp}
\subsection{Randrianarivony's $q{,}p$-Catalan numbers}
Randrianarivony~\cite{Rand} defined $C_n(q,p)$ via:
\begin{equation}\label{eq:rand}
C_n(q,p)=C_{n-1}(q,p)+\sum_{k=0}^{n-2}q\,p^k\,C_k(q,p)\,C_{n-1-k}(q,p),
\quad C_0=C_1=1.
\end{equation}
In this article, $C_n(q,p)$ exclusively denotes Randrianarivony's sequence,
while $C_{n,k}(q,p)$ denotes the triangle below.

\subsection{Definition and properties}
\begin{definition}[$q{,}p$-Catalan triangle]\label{def:qptriangle}
The \emph{$q{,}p$-Catalan triangle} $(C_{n,k}(q,p))_{0\le k\le n}$ is defined by:
\begin{equation}\label{eq:qprec}
\begin{cases}
C_{0,0}(q,p)=1,\\[4pt]
C_{n,0}(q,p)=q^{\binom{n}{2}}, & n\ge1,\\[4pt]
C_{n,k}(q,p)=C_{n,k-1}(q,p)+q^{n-k-1}\,p^k\,C_{n-1,k}(q,p), & 1\le k\le n.
\end{cases}
\end{equation}
This is a two-index array with additive recurrence,
distinct from Randrianarivony's single-index sequence~\cite{Rand}.
\end{definition}

\begin{remark}[Non-coincidence with Randrianarivony]\label{rem:rand}
$C_{2,2}(q,p)=p+q$, while $C_2(q,p)=q+1$ from~\eqref{eq:rand}.
The two families are distinct as bivariate polynomials.
They coincide at $p=q=1$: $C_{n,n}(1,1)=C_n$.
\end{remark}

\begin{theorem}[Specializations]\label{thm:qp}
\begin{enumerate}
\item[\rm(i)] $C_{n,k}(1,p)=\Ctilde_{n,k}(p)$ for all $n,k$;
in particular $C_{n,k}(1,p)=\sum_{\pi\in\Snkp{312}}p^{\Coinv(\pi)}$.
\item[\rm(ii)] For all $n\ge0$:
$C_{n,n}(q,q)=C_n\cdot q^{\binom{n}{2}}$
(proved in~\cite{W5}; verified for $n\le7$).
\end{enumerate}
\end{theorem}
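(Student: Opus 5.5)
Both parts follow from the defining recurrence \eqref{eq:qprec} by specializing the variables and inducting on $n$ (and on $k$ for fixed $n$). No bijective input is needed beyond Corollary~\ref{cor:miroir}. Throughout I use the convention $C_{n,k}(q,p)=0$ for $k>n$, as in Definition~\ref{def:qcat} and Remark~\ref{rem:kn}. This makes the term $q^{n-k-1}p^kC_{n-1,k}(q,p)$ vanish when $k=n$, so the negative exponent $-1$ never actually occurs.

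For (i), set $q=1$ in \eqref{eq:qprec}. This gives $C_{0,0}(1,p)=1$, $C_{n,0}(1,p)=1$ for $n\ge1$, and
\[
C_{n,k}(1,p)=C_{n,k-1}(1,p)+p^{k}\,C_{n-1,k}(1,p),\qquad 1\le k\le n .
\]
These are exactly the initial conditions and the recurrence \eqref{eq:mirec} of Theorem~\ref{thm:miroir}, with $p$ in place of $q$. The convention for $k>n$ agrees on both sides, since $\Ctilde_{n,k}=0$ there by definition. We also need $\Ctilde_{0,0}=1$, which is immediate from \eqref{eq:miroir_def}.

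The proof of (i) is then a double induction. Induct on $n$, and for fixed $n$ on $k$ from $0$ to $n$. At each step both sides are determined by values already known to agree, so $C_{n,k}(1,p)=\Ctilde_{n,k}(p)$. The second assertion of (i) is then immediate from \eqref{eq:coinv_312} in Corollary~\ref{cor:miroir}.

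For (ii), I would prove the stronger statement that
\[
C_{n,k}(q,q)=q^{\binom{n}{2}}\,C_{n,k}\qquad\text{for all } 0\le k\le n,
\]
and then take $k=n$, using $C_{n,n}=C_n$ from Definition~\ref{def:cat}. Putting $p=q$ in \eqref{eq:qprec}, the weight becomes $q^{n-k-1}q^{k}=q^{n-1}$, which does not depend on $k$. This uniformity is the key point. The base cases hold directly: $C_{n,0}(q,q)=q^{\binom n2}=q^{\binom n2}C_{n,0}$ and $C_{0,0}(q,q)=1$.

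For the inductive step, assume the claim for $(n,k-1)$ and $(n-1,k)$; the latter is trivially true if $k=n$, since both sides vanish. Then
\[
C_{n,k}(q,q)=q^{\binom n2}C_{n,k-1}+q^{n-1}q^{\binom{n-1}{2}}C_{n-1,k}
=q^{\binom n2}\bigl(C_{n,k-1}+C_{n-1,k}\bigr)=q^{\binom n2}C_{n,k},
\]
using $\binom{n-1}{2}+(n-1)=\binom n2$ and the classical recurrence \eqref{eq:rec_class}.

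The only point I expect to need care is the edge case $k=n$, together with the $k>n$ convention, in both parts. One has to check that the zero term really does drop out, so that no exponent $q^{-1}$ appears and the induction hypothesis at $(n-1,n)$ holds trivially. Everything else is routine exponent bookkeeping.
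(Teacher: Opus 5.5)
Your proof is correct.

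\textbf{Part (i).} This is what the paper implicitly intends, since it gives no separate proof. Setting $q=1$ in \eqref{eq:qprec} reproduces the initial values and the recurrence \eqref{eq:mirec} of Theorem~\ref{thm:miroir}, and the permutation formula then comes from \eqref{eq:coinv_312}. Definition~\ref{def:qptriangle} leaves the zero convention at $k=n$ implicit; Remark~\ref{rem:rand} confirms it. Your explicit treatment of that edge case is a welcome addition.

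\textbf{Part (ii).} Here you genuinely depart from the paper. The article gives no argument at all: it defers to \cite{W5}, cites computer verification for $n\le 7$, and calls the identity a ``remarkable specialization.'' Your key observation is that at $p=q$ the weight $q^{n-k-1}p^k$ becomes $q^{n-1}$, which does not depend on $k$. Combined with $\binom{n-1}{2}+(n-1)=\binom n2$, this turns the identity into a two-line induction. It also gives the stronger row-wise statement $C_{n,k}(q,q)=q^{\binom n2}C_{n,k}$ for every $0\le k\le n$. The only outside input is the classical fact $C_{n,n}=C_n$.

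\textbf{Comparison with the paper's apparent route.} The paper's only visible mechanism is Conjecture~\ref{conj:qp}. If $C_{n,k}(q,p)=\sum_{\pi\in\Snkp{312}}q^{\Inv(\pi)}p^{\Coinv(\pi)}$, then at $p=q$ every permutation contributes $q^{\Inv(\pi)+\Coinv(\pi)}=q^{\binom n2}$.

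That route explains the identity term by term, but it first needs the bivariate interpretation. This interpretation in fact follows from the same insertion map $\Phi$ used in Theorem~\ref{thm:qperm}: inserting $n$ at position $k+1$ adds $n-k-1$ inversions and $k$ co-inversions. Your argument needs nothing beyond the recurrence itself.
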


\begin{conjecture}[Bivariate combinatorial interpretation]\label{conj:qp}
For all $n\ge1$ and $0\le k\le n$:
$C_{n,k}(q,p)=\sum_{\pi\in\Snkp{312}}q^{\Inv(\pi)}\,p^{\Coinv(\pi)}$.
Verified for $n\le5$; proof in~\cite{W5}.
\end{conjecture}

\begin{example}\label{ex:qp_n2}
For $n=2$, $k=2$: $q^0p^1+q^1p^0=p+q=C_{2,2}(q,p)$.\checkmark

For $n=3$, $k=3$:
$\sum=p^3+2qp^2+q^2p+q^3=C_{3,3}(q,p)$.\checkmark
\end{example}

\section{Multivariate generalization and cyclotomic analogues}\label{sec:multi}
\begin{definition}[$i$-inversions]\label{def:invi}
$\mathrm{inv}_i(\pi)=|\{(a,b):1\le a<b\le n,\;\pi_a>\pi_b,\;
\pi_a\equiv i\pmod{\mu}\}|$.
\end{definition}
\begin{definition}[Multivariate $q$-Catalan triangle]\label{def:multi}
$C_{n,k}(\bq)=\sum_{\pi\in\Snkp{312}}\prod_{i=1}^{\mu}q_i^{\,\mathrm{inv}_i(\pi)}$.
\end{definition}
\begin{theorem}[Multivariate recurrence]\label{thm:multi_rec}
\begin{equation}\label{eq:multi_rec}
\begin{cases}
C_{n,0}(\bq)=\prod_{i=1}^{\mu}q_i^{\,e_i(n)},\\[6pt]
C_{n,k}(\bq)=C_{n,k-1}(\bq)+q_{r(n)}^{\,n-k-1}\,C_{n-1,k}(\bq), & 1\le k\le n,
\end{cases}
\end{equation}
where $r(n)=((n-1)\bmod\mu)+1$ and
$e_i(n)=|\{(a,b):1\le a<b\le n,\;a\equiv i\pmod\mu\}|$.
Proof in~\cite{W5}.
\end{theorem}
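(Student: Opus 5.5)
The plan is to repeat the proof of Theorem~\ref{thm:qperm} with the multivariate weight $\prod_i q_i^{\mathrm{inv}_i(\pi)}$ in place of $q^{\Inv(\pi)}$. The argument is an induction on $n$ using the disjoint decomposition $\Snkp{312}=S'_{n,k-1}(312)\sqcup\Snk{312}$ and the insertion bijection
\[\Phi:S'_{n-1,k}(312)\to\Snk{312},\qquad \pi\mapsto\pi_1\cdots\pi_k\,n\,\pi_{k+1}\cdots\pi_{n-1},\]
which was already established there. Summing the weight over the two blocks gives $C_{n,k-1}(\bq)$ from the first block. It remains to show that the second block contributes $q_{r(n)}^{\,n-k-1}C_{n-1,k}(\bq)$. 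For $k=n$ the second block is empty, which matches the convention $C_{n-1,n}(\bq)=0$, exactly as in Remark~\ref{rem:kn}.

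\textbf{Key step: how $\Phi$ changes each $\mathrm{inv}_i$.} Take $\pi\in S'_{n-1,k}(312)$ and let $\sigma=\Phi(\pi)$.
\begin{itemize}
\item Every inversion of $\pi$ remains an inversion of $\sigma$ with the same larger entry. Hence it keeps its colour $i$, defined by the larger entry $\pi_a\equiv i\pmod\mu$.
\item The only new inversions are the pairs $(n,\pi_j)$ with $j>k$. There are exactly $n-k-1$ of them, and in each the larger entry is $n$.
\item The colour of $n$ is the unique $i\in\{1,\dots,\mu\}$ with $i\equiv n\pmod\mu$. Since $((n-1)\bmod\mu)+1\equiv n\pmod\mu$, this index is $r(n)$.
\end{itemize}
Therefore $\mathrm{inv}_i(\sigma)=\mathrm{inv}_i(\pi)+(n-k-1)\,\delta_{i,r(n)}$. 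Summing over $\pi$ and applying the induction hypothesis gives the recurrence in~\eqref{eq:multi_rec}.

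\textbf{Initial condition: the expected obstacle.} $S'_{n,0}(312)$ consists only of the decreasing permutation $n(n-1)\cdots1$. In it every pair of values $u<v$ is an inversion whose larger entry is $v$. So
\[\mathrm{inv}_i=\sum_{\substack{v\le n\\ v\equiv i \ (\mathrm{mod}\ \mu)}}(v-1)=\bigl|\{(a,b):1\le a<b\le n,\ b\equiv i\ (\mathrm{mod}\ \mu)\}\bigr|.\]
This counts pairs by their larger element $b$. The stated $e_i(n)$ instead counts them by the smaller element $a$, which gives $\sum_{a\equiv i}(n-a)$.

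These two counts differ in general. For $n=2$, $\mu=2$, the decreasing permutation $21$ gives $\mathrm{inv}_2=1$, whereas $e_1(2)=1$ and $e_2(2)=0$. So before writing the proof I would check the convention in Definition~\ref{def:invi} against the definition of $e_i(n)$.

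\begin{itemize}
\item If $e_i(n)$ is read with the condition on $b$, the base case holds verbatim and the induction closes.
\item The same reading is forced independently by the recurrence itself: at $k=0$ there is no $k-1$ term, and iterating $C_{m,0}=q_{r(m)}^{\,m-1}C_{m-1,0}$ for $m=2,\dots,n$ reproduces exactly the sum $\sum_{b\equiv i}(b-1)$.
\item Alternatively, colouring inversions by the smaller entry, or by the position $a$ of the larger entry, would require changing $q_{r(n)}$ in the recurrence accordingly.
\end{itemize}
I therefore expect the only real work to be fixing this indexing convention consistently between Definitions~\ref{def:invi} and~\ref{def:multi} and the statement of Theorem~\ref{thm:multi_rec}. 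The inductive step itself is a direct transcription of the proof of Theorem~\ref{thm:qperm}.
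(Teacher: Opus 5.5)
Your argument for the second line of~\eqref{eq:multi_rec} is correct and is the natural one. The paper defers this proof to~\cite{W5}, but its evident template is the proof of Theorem~\ref{thm:qperm}, which you rerun with the weight $\prod_i q_i^{\mathrm{inv}_i}$. Your colour bookkeeping under $\Phi$ is exactly what is needed: old inversions keep their larger entry, the $n-k-1$ new ones all have larger entry $n$, and $r(n)\equiv n\pmod{\mu}$. No induction is actually involved, since $C_{n,k}(\bq)$ is \emph{defined} as a sum over $\Snkp{312}$. You were also right to impose $C_{n-1,n}(\bq)=0$ by hand, because read through Definition~\ref{def:multi} the set $S'_{n-1,n}(312)$ is all of $S_{n-1}(312)$.

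Your diagnosis of the initial condition is also correct, and the defect lies in the statement, not in your proof. With Definition~\ref{def:invi} one has $C_{2,0}(\bq)=q_2$ for $\mu=2$, whereas the printed formula gives $q_1$. Likewise $C_{4,0}(\bq)=q_1^{2}q_2^{4}$, against the printed $q_1^{4}q_2^{2}$. So the theorem holds precisely with $e_i(n)=\sum_{1\le b\le n,\ b\equiv i\ (\mathrm{mod}\ \mu)}(b-1)$, which is what you establish.

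One correction to your side remark about other conventions.
\begin{itemize}
\item Colouring by the position of the larger entry does reproduce the printed $e_i(n)$ and still yields a monomial recurrence. The factor is $\prod_{a=k+1}^{n-1}q_{r(a)}$ instead of $q_{r(n)}^{\,n-k-1}$. This uses that the tail $\pi_{k+1}\cdots\pi_{n-1}$ of any $\pi\in S'_{n-1,k}(312)$ is decreasing, the same fact that makes $\Phi$ well defined and bijective.
\item Colouring by the smaller entry destroys the recurrence altogether, because the new inversions $(n,\pi_j)$ then take the colours of the tail entries, which vary with $\pi$. For $n=4$, $k=1$, $\mu=2$ the tails $21$, $32$, $31$ contribute $q_1q_2$, $q_1q_2$, $q_1^{2}$. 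The resulting contribution of $S_{4,1}(312)$ is then not a monomial multiple of the correspondingly coloured $C_{3,1}$.
\end{itemize}
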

\begin{remark}\label{rem:mu1}
For $\mu=1$, $e_1(n)=\binom{n}{2}$ and~\eqref{eq:multi_rec} reduces to~\eqref{eq:rec_q}.
\end{remark}
\begin{corollary}\label{cor:spec_multi}
$q_i=1\Rightarrow C_{n,k}(\mathbf{1})=C_{n,k}$;\quad
$q_i=q\Rightarrow C_{n,k}(\bq)=C_{n,k}(q)$;\quad
$\mu=2,(q_1,q_2)=(q,p)\Rightarrow C_{n,k}(\bq)=C_{n,k}(q,p)$.
\end{corollary}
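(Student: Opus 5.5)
The plan is to treat the three specializations separately. Each one is obtained by substituting into Definition~\ref{def:multi} (or into the recurrence of Theorem~\ref{thm:multi_rec}) and then comparing with an interpretation already established earlier in the paper.

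\emph{First specialization.} Put $q_i=1$ for all $i$. Every summand in Definition~\ref{def:multi} becomes $1$, so $C_{n,k}(\mathbf{1})=|\Snkp{312}|$. This equals $C_{n,k}$ by Theorem~\ref{thm:class_perm}. Alternatively, set $q=1$ in Theorem~\ref{thm:qperm} and note that \eqref{eq:rec_q} at $q=1$ is \eqref{eq:rec_class}.

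\emph{Second specialization.} Put $q_i=q$ for all $i$. Every inversion $(a,b)$ of $\pi$ has a value $\pi_a$ lying in exactly one residue class modulo~$\mu$. The sets counted by $\mathrm{inv}_1,\dots,\mathrm{inv}_\mu$ therefore partition the inversion set, so $\sum_i\mathrm{inv}_i(\pi)=\Inv(\pi)$. The product then collapses to $q^{\Inv(\pi)}$, and Theorem~\ref{thm:qperm} gives $C_{n,k}(q)$. A second route is recursive: with all $q_i=q$, the recurrence \eqref{eq:multi_rec} becomes \eqref{eq:rec_q}, since $\sum_i e_i(n)=\binom n2$ (Remark~\ref{rem:mu1}). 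Induction on $n$ then concludes.

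\emph{Third specialization.} For $\mu=2$ and $(q_1,q_2)=(q,p)$, I would argue by induction on $(n,k)$. The aim is to show that the specialized system \eqref{eq:multi_rec} coincides with \eqref{eq:qprec}, namely the same initial values and the same weight on $C_{n-1,k}$. This is the step I expect to be the real obstacle, because the two systems do not visibly agree.
\begin{itemize}
\item In \eqref{eq:multi_rec} the weight on $C_{n-1,k}$ is the pure power $q_{r(n)}^{\,n-k-1}$. In \eqref{eq:qprec} it is $q^{n-k-1}p^k$.
\item The initial value $q^{e_1(n)}p^{e_2(n)}$ should be compared with $q^{\binom n2}$.
\end{itemize}

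Before attempting the induction, I would test small cases. For $n=3$, $k=0$, Definition~\ref{def:multi} applied to $321$ gives $q^{2}p$ (value $3$ contributes two inversions to $\mathrm{inv}_1$, value $2$ one inversion to $\mathrm{inv}_2$). This agrees with $e_1(3)=2$, $e_2(3)=1$, but differs from $C_{3,0}(q,p)=q^3$. So the identity cannot hold literally as stated.

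I would therefore first settle the intended normalization. One candidate is the bivariate weight $q^{\Inv}p^{\Coinv}$ of Conjecture~\ref{conj:qp}. Another is a redefinition of $C_{n,k}(q,p)$ with initial value $q^{e_1(n)}p^{e_2(n)}$. Only after fixing the normalization would I run the matching induction, reusing the insertion bijection $\Phi$ from the proof of Theorem~\ref{thm:qperm}. Under $\Phi$, inserting $n$ at position $k+1$ creates $n-k-1$ inversions, all headed by the value $n$, hence all of colour $r(n)$. This is exactly the content of Theorem~\ref{thm:multi_rec}.
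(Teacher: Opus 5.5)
Your first two specializations are correct. The paper states this corollary without proof, as if it followed by specializing \eqref{eq:multi_rec}. Your route through the definition is the better one. The classes $\pi_a\equiv i\pmod{\mu}$, $1\le i\le\mu$, partition the inversion set, so Definition~\ref{def:multi} collapses to $|\Snkp{312}|$ or to $\sum_{\pi\in\Snkp{312}}q^{\Inv(\pi)}$, and Theorem~\ref{thm:qperm} finishes both. At $q=1$ it gives $|\Snkp{312}|=C_{n,k}$ directly, since \eqref{eq:rec_q} reduces to \eqref{eq:rec_class}, so you need not invoke the merely announced Theorem~\ref{thm:class_perm}.

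Avoid the alternative route through Theorem~\ref{thm:multi_rec}. Its initial value colours inversions by the position $a$ rather than by the value $\pi_a$, and for even $n$ it contradicts Definition~\ref{def:multi}. For $\mu=2$, $n=2$, the definition gives $C_{2,0}(\bq)=q_2$, because the inversion of $21$ is headed by the even value $2$; but $e_1(2)=1$, $e_2(2)=0$ give $q_1$. Your agreement $e_1(3)=2$, $e_2(3)=1$ holds only because $n=3$ is odd. Only the recursive step $q_{r(n)}^{\,n-k-1}$ of that theorem is right, for exactly the colour reason you give at the end.

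For the third specialization you are right that there is nothing to prove: it is false. Your computation at $(n,k)=(3,0)$ is correct. It already fails at $n=2$: the set $\{21,12\}$ gives $C_{2,2}(\bq)=p+1$ at $(q_1,q_2)=(q,p)$, while Remark~\ref{rem:rand} gives $C_{2,2}(q,p)=p+q$.

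The obstruction is structural. By \eqref{eq:qprec}, $C_{n,k}(q,p)$ is homogeneous of degree $\binom n2$: the base is $q^{\binom n2}$, and the weight $q^{n-k-1}p^k$, of degree $n-1$, multiplies a polynomial homogeneous of degree $\binom{n-1}{2}$. By contrast, $\mathrm{inv}_1$ and $\mathrm{inv}_2$ merely split $\Inv$ by the parity of the larger entry, so the monomial of $\pi$ has degree $\Inv(\pi)$. For $n\ge2$ and $k\ge n-1$ the identity permutation contributes the constant $1$. For $k=0$ the decreasing permutation carries a factor $p^{\mathrm{inv}_2}$ with $\mathrm{inv}_2\ge1$.

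Hence your second candidate repair, changing only the initial value of $C_{n,k}(q,p)$ to $q^{e_1(n)}p^{e_2(n)}$, cannot succeed: the step weights $q^{n-k-1}p^k$ and $q_{r(n)}^{\,n-k-1}$ differ as well.

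Your first candidate is the correct statement. Under $\Phi$, inserting $n$ at position $k+1$ creates $n-k-1$ inversions and exactly $k$ co-inversions. The proof of Theorem~\ref{thm:qperm} then goes through verbatim and gives $C_{n,k}(q,p)=\sum_{\pi\in\Snkp{312}}q^{\Inv(\pi)}p^{\Coinv(\pi)}$, which is Conjecture~\ref{conj:qp}. That identity concerns a different generating function and is not a specialization of $C_{n,k}(\bq)$. The third clause of the corollary should be replaced by it rather than proved.
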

For $\omega=e^{2i\pi/\mu}$, the \emph{cyclotomic analogues} are
$C_{n,k}^{(\mu)}=C_{n,k}(\omega,\ldots,\omega)$,
related to the cyclic sieving phenomenon~\cite{RSW}.
\begin{conjecture}[Cyclotomic Catalan]\label{conj:cyclo}
If $\mu\mid(n+1)$: $C_{n,n}^{(\mu)}=\frac{1}{\mu}\binom{n+1}{(n+1)/\mu}$.
\end{conjecture}
\begin{remark}\label{rem:cyclo_tri}
For $\mu=2$: $C_{n,k}^{(2)}=(-1)^{\lfloor n/2\rfloor}C_{n,k}$ (verified for $n\le5$).
General formula in~\cite{W5}.
\end{remark}

\section{Summary table}\label{sec:table}
\[\begin{array}{l}
C_{n,k}\hookrightarrow C_{n,k}(q),\Ctilde_{n,k}(q)
\hookrightarrow C_{n,k}(q,p)
\hookrightarrow C_{n,k}(\bq)\hookrightarrow C_{n,k}(\bq),\bq\in\CC^\mu
\end{array}\]
\begin{center}\renewcommand{\arraystretch}{1.9}
\begin{tabular}{llll}\toprule
\textbf{Result} & \textbf{Family} & \textbf{Statistic} & \textbf{Ref.}\\\midrule
$|\Snkp{\tau}|=C_{n,k}$, all $\tau$ & six families & enumeration & \cite{W1}\\
$C_{n,k}(q)=\sum q^{\Inv}$ & $\hSnkp{312}$,$\hSnkp{231}$ & $\Inv(\pi)$ & \cite{W3}\\
$C_{n,k}(q)=\sum q^{\Coinv}$ & $\hSnkp{213}$,$\hSnkp{132}$ & $\Coinv(\pi)$ & \cite{W3}\\
Shifted stat.: $C_{n-1,k}(q)$ & $\Snk{312}$ & $\invstar(\pi)$ & \cite{W2}\\
$\Ctilde_{n,k}(q)=\sum q^{\Coinv}$ & $\Snkp{312}$ & $\Coinv(\pi)$ & Cor.~\ref{cor:miroir}\\
$\Ctilde_{n,k}(q)=\sum q^{\Area}$ & $\Dnk$ & $\Area(\gamma)$ & Cor.~\ref{cor:miroir}\\
$C_{n,k}(q)=\sum q^{\binom{n}{2}-\Area}$ & $\Dnk$ & $\binom{n}{2}-\Area$ & \cite{W4}\\
$C_{n,k}(q)=\sum q^{\binom{n}{2}-\Inv(w)}$ & $\Mnk$ & $\binom{n}{2}-\Inv(w)$ & \cite{W4}\\
$C_{n,k}(q)=\sum q^{\mathrm{stat}(t)}$ & $T'_{n,k}$ & $\mathrm{stat}(t)$ & \cite{W4}\\
$C_{n,n}(q,q)=C_nq^{\binom{n}{2}}$ & (all) & specialization & \cite{W5}\\
Multivariate $C_{n,k}(\bq)$ & $\Snkp{312}$ & $\prod q_i^{\mathrm{inv}_i}$ & \cite{W5}\\
Cyclotomic $C_{n,k}^{(\mu)}$ & $\Snkp{312}$ & cyclic action & \cite{W5}\\\bottomrule
\end{tabular}
\end{center}

\section{Plan of the series}\label{sec:plan}
\begin{description}
\item[\cite{W1}] \textbf{Article~I.} Classical interpretations ($q=1$).
Explicit bijections between $\Snkp{\tau}$, $\Dnk$,
parenthesis words $P'_{n,k}$ and triangulations $T'_{n,k}$.
Construction of the insertion map~$\Phi$ and the symmetry group $G_\tau$.
Proof of Theorem~\ref{thm:class_perm}.
\item[\cite{W2}] \textbf{Article~II.} Statistics on $\Snkp{\tau}$ and $\Dnk$.
Complete proof of Theorem~\ref{thm:invstar}.
\item[\cite{W3}] \textbf{Article~III.} $q$-analogue via permutations.
Shift lemma, mirror bijection, symmetries between patterns $312$ and $213$.
\item[\cite{W4}] \textbf{Article~IV.} $q$-analogue via Dyck paths, binary words
and triangulations. Proofs of Theorems~\ref{thm:dyck}, \ref{thm:binary},
\ref{thm:triang} and Lemma~\ref{lem:area_inv}.
\item[\cite{W5}] \textbf{Article~V.} $q{,}p$-analogue and multivariate generalization.
Cyclotomic analogues of the $q$-Catalan triangle.
\end{description}

\section{Conclusion and perspectives}\label{sec:conclusion}
We have introduced and studied the \emph{$q$-Catalan triangle}
$(C_{n,k}(q))_{0\le k\le n}$, simultaneously refining the classical
Catalan triangle $(C_{n,k})$ and the $q$-Catalan numbers of
F\"urlinger and Hofbauer~\cite{FH}.

\medskip\noindent\textbf{Mirror polynomial.}
The mirror polynomial $\Ctilde_{n,k}(q)=q^{\binom{n}{2}}C_{n,k}(q^{-1})$
satisfies the dual recurrence (Theorem~\ref{thm:miroir}, proved here)
and admits the interpretations
$\sum_{\Snkp{312}}q^{\Coinv}=\sum_{\Dnk}q^{\Area}=\Ctilde_{n,k}(q)$
(Corollary~\ref{cor:miroir}).

\medskip\noindent\textbf{Universal family.}
The family $\mathfrak{F}_{n,k}$ with universal statistic~$\sigma$
(Corollary~\ref{cor:equi}) unifies seven realizations of $C_{n,k}(q)$.

\medskip\noindent\textbf{Open problems.}
\begin{enumerate}
\item Complete bivariate proof of Conjecture~\ref{conj:qp}.
\item Bijective proof of Theorem~\ref{thm:classif_motifs}~\cite{W3}.
\item Explicit formulas for $C_{n,k}^{(\mu)}$ (Conjecture~\ref{conj:cyclo}).
\end{enumerate}

\end{document}